\title{Finite generation of adjoint ring for log surfaces}
\author{Kenta Hashizume} 
\date{2015/11/30, version {0.16}}
\keywords{adjoint ring, finite generation, log surfaces, minimal model program, 
abundance theorem.}
\subjclass[2010]{Primary 14E30; Secondary 14J10}
\address{Department of Mathematics, Graduate School of Science, 
Kyoto University, Kyoto 606-8502, Japan}
\email{hkenta@math.kyoto-u.ac.jp}
\newtheorem{thm}{Theorem}[section]
\newtheorem{lem}[thm]{Lemma}
\newtheorem{lema}{Lemma A}
\newtheorem{lemb}{Lemma B}
\newtheorem{cor}[thm]{Corollary}
\theoremstyle{definition}
\newtheorem{defn}[thm]{Definition}
\newtheorem{defnb}[lemb]{Definition B}
\newtheorem{rem}[thm]{Remark}
\newtheorem{remb}[lemb]{Remark B}
\newtheorem*{ack}{Acknowledgments} 
\newtheorem{step}{Step}
\begin{document}

\maketitle 

\begin{abstract}
We prove the finite generation of the adjoint ring for $\mathbb{Q}$-factorial log surfaces over 
any algebraically closed field.
\end{abstract}

\tableofcontents 

\section{Introduction}\label{sec1}

In this paper, we prove

\begin{thm}[Main Theorem]\label{main-thm}
Let $\pi: X \to U$ be a proper  
morphism from a normal $\mathbb{Q}$-factorial surface 
to a variety over an algebraically 
closed field and let $\Delta^{\bullet}=(\Delta_{1}, \, \cdots, \Delta_{n})$ be 
an $n$-tuple of boundary $\mathbb{Q}$-divisors. 
Then the adjoint ring of $(\pi, \Delta^{\bullet})$
$$\mathcal{R}(\pi, \Delta^{\bullet})=
\underset{(m_{1}, \, \cdots, \,m_{n}) \in (\mathbb{Z}_{ \geq 0})^{n}}{\bigoplus}
\pi_{*} \mathcal{O}_{X}(\lfloor \sum_{i=1}^{n}m_{i}(K_{X}+\Delta_{i}) \rfloor)$$
is a finitely generated $\mathcal{O}_{U}$-algebra. 
\end{thm}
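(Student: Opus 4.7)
The overall strategy is to combine (i) the minimal model program and the abundance theorem for $\mathbb{Q}$-factorial log surfaces in arbitrary characteristic with (ii) a Shokurov-type polyhedral decomposition argument. As a first step I would reduce the statement to a uniform claim: finite generation of the $(\mathbb{Z}_{\geq 0})^{n}$-graded ring $\mathcal{R}(\pi,\Delta^{\bullet})$ is equivalent to a compatible finite generation of the individual log canonical rings $R(\pi, K_X + \Delta)$ as $\Delta$ varies over the rational polytope $\mathcal{P}$ of $\mathbb{Q}$-boundaries spanned by $\Delta_{1}, \ldots, \Delta_{n}$.

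For each $\Delta \in \mathcal{P}$, I would run the relative MMP for $(X,\Delta)$ over $U$. On a $\mathbb{Q}$-factorial surface only divisorial contractions occur and the program terminates in finitely many steps; this is known for log surfaces over any algebraically closed field, regardless of the singularities of the pair. If $K_X+\Delta$ is $\pi$-pseudo-effective, one obtains a log minimal model $(X', \Delta')$ over $U$, and the abundance theorem for log surfaces (valid in arbitrary characteristic) shows that $K_{X'}+\Delta'$ is $\pi$-semiample; hence the relative section ring $R(\pi, K_X+\Delta)$ is a finitely generated $\mathcal{O}_U$-algebra. If instead $K_X+\Delta$ is not pseudo-effective, the MMP ends in a Mori fiber space and the associated section ring is essentially trivial.

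The main task is then to glue these individual finite generations into a single finite generation for the multi-graded ring $\mathcal{R}(\pi,\Delta^{\bullet})$. For this I would invoke a polyhedral decomposition: the pseudo-effective part of $\mathcal{P}$ is a rational polytope, which should subdivide into finitely many rational subchambers on which the ample model of $K_X + \Delta$ is constant, together with a ``non-pseudo-effective'' region. On each subchamber the associated section rings are generated by sections pulled back from a common ample model, and a patching argument in the spirit of Shokurov and Corti--Lazi\'c then assembles the chamber-wise finite generations into finite generation of $\mathcal{R}(\pi,\Delta^{\bullet})$ itself.

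I expect the main obstacle to be the rigorous construction of this polyhedral chamber structure. While the basic MMP and abundance ingredients are available in arbitrary characteristic for log surfaces without klt or lc hypotheses, controlling how the MMP output varies with $\Delta$---especially when the boundaries in $\mathcal{P}$ are non-lc, where standard Diophantine/finiteness results for models are more delicate---requires careful bookkeeping of the finitely many extremal curves that can be contracted along the program. In dimension two this bookkeeping is combinatorial rather than deep, but it must be carried out uniformly over the whole polytope $\mathcal{P}$ and in the relative setting over $U$, which is where the real technical work should lie.
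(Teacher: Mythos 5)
Your high-level strategy---MMP and abundance for $\mathbb{Q}$-factorial log surfaces in arbitrary characteristic, a Shokurov-type polytope decomposition, and a chamber-wise gluing argument---matches the paper's route, but you stop one step short of a proof. Once the polytope has been decomposed so that a single birational model works on each chamber and abundance has made each $K_X+\Delta_i$ semi-ample after pushing forward, you still have to prove that the \emph{multi-graded} ring of $n$ semi-ample $\mathbb{Q}$-Cartier divisors is finitely generated. Your phrase ``generated by sections pulled back from a common ample model'' does not close this gap: the $\Delta_i$ need not share a common ample model, and even when a single birational model makes them all nef, the joint (multi-index) finite generation is still a genuine statement. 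The paper handles it in Lemma~\ref{lem5.1} with a projective bundle: set $\mathcal{E}=\bigoplus_i\mathcal{O}_X(D_i)$, pass to $\boldsymbol{P}_X(\mathcal{E})$, and identify the multi-graded ring with the single-graded section ring of $\mathcal{O}_{\boldsymbol{P}_X(\mathcal{E})}(1)$, which is base-point free because each $D_i$ is; this reduces $n$ semi-ample divisors to one, where finite generation is standard. Without this (or an equivalent device) your argument does not terminate.

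A secondary imprecision: you describe the chambers as regions where ``the ample model of $K_X+\Delta$ is constant,'' but the paper's Lemma~\ref{lem3.4} produces chambers $W_i$ on which a fixed $f_i:X\to Y_i$ is a \emph{weak log canonical model}; within such a chamber the ample model can and does vary (its dimension can drop as $\Delta$ approaches the boundary of the pseudo-effective locus). What the fixed $f_i$ actually buys is the ability to push the whole multi-graded ring forward to $Y_i$ and then apply abundance to each vertex of the chamber. Finally, the reduction from the full polytope to a single simplex whose vertices lie in one chamber is not automatic: it rests on the truncation and coordinate-dropping lemmas for $(\mathbb{Z}_{\geq0})^n$-graded rings (Lemmas~A~\ref{cora2}, A~\ref{lema3}) and the rational simplex covering Lemma~B~\ref{lemb2}. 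You flag ``bookkeeping'' as the obstacle, but it is these specific graded-ring and polytope lemmas, rather than control of extremal rays over the polytope, where the real patching work lies.
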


As a corollary, we have

\begin{cor}\label{cor1.2}
Let $\pi:X \to U$ be a proper morphism from a normal surface to a variety 
over an algebraically closed field and 
let $\Delta^{\bullet}=(\Delta_{1}, \, \cdots , \Delta_{n})$ 
be an $n$-tuple of $\mathbb{Q}$-divisors such that 
$(X, \Delta_{i})$ is log canonical for every 
$1 \leq i \leq n$. 
Then the adjoint ring 
$\mathcal{R}(\pi, \Delta^{\bullet})$ 
of $(\pi,\Delta^{\bullet})$ is a finitely generated $\mathcal{O}_{U}$-algebra. 
\end{cor}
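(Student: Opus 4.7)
The plan is to reduce Corollary~\ref{cor1.2} to Theorem~\ref{main-thm} by passing to a log resolution. Take $f\colon Y\to X$ to be a log resolution of the pair $(X,\sum_{i=1}^{n}\Delta_{i})$; then $Y$ is a smooth surface, hence $\mathbb{Q}$-factorial, and $\pi\circ f\colon Y\to U$ is proper. Let $F$ denote the reduced $f$-exceptional divisor and set
$$\Delta_{i}^{Y}:=f^{-1}_{*}\Delta_{i}+F \qquad (i=1,\ldots,n),\qquad \Delta^{Y,\bullet}:=(\Delta_{1}^{Y},\ldots,\Delta_{n}^{Y}).$$
Each $\Delta_{i}$ is a boundary (as $(X,\Delta_{i})$ is log canonical), and $F$ is reduced, so every $\Delta_{i}^{Y}$ is a boundary $\mathbb{Q}$-divisor on $Y$. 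Theorem~\ref{main-thm} applied to $(\pi\circ f,\Delta^{Y,\bullet})$ then yields that $\mathcal{R}(\pi\circ f,\Delta^{Y,\bullet})$ is finitely generated over $\mathcal{O}_{U}$.

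It remains to identify $\mathcal{R}(\pi,\Delta^{\bullet})$ with $\mathcal{R}(\pi\circ f,\Delta^{Y,\bullet})$ as graded $\mathcal{O}_{U}$-algebras. Using $(\pi\circ f)_{*}=\pi_{*}\circ f_{*}$, this amounts to the sheaf equality
$$\mathcal{O}_{X}\!\left(\lfloor \textstyle\sum_{i} m_{i}(K_{X}+\Delta_{i})\rfloor\right)\;=\;f_{*}\mathcal{O}_{Y}\!\left(\lfloor \textstyle\sum_{i} m_{i}(K_{Y}+\Delta_{i}^{Y})\rfloor\right)$$
for every $\vec m\in(\mathbb{Z}_{\geq 0})^{n}$. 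Log canonicity of each $(X,\Delta_{i})$ gives the discrepancy relation $K_{Y}+\Delta_{i}^{Y}=f^{*}(K_{X}+\Delta_{i})+E_{i}'$ with $E_{i}'$ an effective $f$-exceptional $\mathbb{Q}$-divisor (the condition $a(E_{j};X,\Delta_{i})\geq -1$ is precisely what makes $E_{i}'$ effective). Summing, the two $\mathbb{Q}$-divisors inside the floors differ by the effective $f$-exceptional $\mathbb{Q}$-divisor $G_{\vec m}:=\sum_{i} m_{i}E_{i}'$.

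The remaining step is a local valuation check. On strict transforms of prime divisors of $X$ the two conditions agree on the nose, since $G_{\vec m}$ has no strict-transform component and $f$ is an isomorphism at those generic points. Along an exceptional prime $E_{j}$, let $c_{j}$ denote the coefficient of $E_{j}$ in $f^{*}\sum_{i} m_{i}(K_{X}+\Delta_{i})$ and $g_{j}\geq 0$ that of $E_{j}$ in $G_{\vec m}$. A section $\varphi\in K(X)=K(Y)$ of the left-hand sheaf satisfies $v_{E_{j}}(\varphi)\geq -c_{j}$ by pulling back the divisor inequality on $X$; integrality $v_{E_{j}}(\varphi)\in\mathbb{Z}$ strengthens this to $v_{E_{j}}(\varphi)\geq -\lfloor c_{j}\rfloor$, and combining with $g_{j}\geq 0$ (so $\lfloor c_{j}+g_{j}\rfloor\geq \lfloor c_{j}\rfloor$) yields $v_{E_{j}}(\varphi)+\lfloor c_{j}+g_{j}\rfloor\geq 0$, the condition for $\varphi$ to lie in the right-hand sheaf. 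I expect this exceptional-divisor bookkeeping to be the only delicate point; the conceptual content is log canonicity (forcing $G_{\vec m}\geq 0$) together with Theorem~\ref{main-thm} applied to the smooth boundary pair $(Y,\Delta_{i}^{Y})$.
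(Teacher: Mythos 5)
Your proof is correct and follows essentially the same route as the paper: pass to a resolution $f\colon Y\to X$, replace each $\Delta_i$ by a boundary $\mathbb{Q}$-divisor on $Y$ that differs from $f^*(K_X+\Delta_i)-K_Y$ by an effective exceptional divisor (which is where log canonicity enters), identify the two adjoint rings, and invoke Theorem \ref{main-thm}. The only cosmetic difference is that the paper takes $\Gamma_i$ with minimal exceptional coefficients $\max(0,-a(E_j;X,\Delta_i))$ so that $\Gamma_i$ and $E_i$ share no components, whereas you take the uniform $f^{-1}_*\Delta_i+F$ with all exceptional coefficients equal to $1$; both lie in $[0,1]$ and produce the same adjoint ring, and your explicit valuation check is the standard argument the paper leaves implicit.
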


Theorem \ref{main-thm} is a generalization of \cite[Corollary 1.5]{fujita} 
and \cite[Corollary 1.3]{fujinotanaka}. 
In higher dimension, the finite generation of the adjoint ring is known by Birkar, Cascini, Hacon and 
M\textsuperscript{c}Kernan \cite{bchm} when
$\pi:X \to U$ is a projective morphism of normal quasi-projective varieties over the complex number field 
and $\Delta^{\bullet}=(A+B_{1}, \, \cdots, A+B_{n})$, where $A\geq0$ is a general $\pi$-ample 
$\mathbb{Q}$-divisor and $B_{i}$ is an effective $\mathbb{Q}$-divisor such that 
$(X,\,A+B_{i})$ is divisorial log terminal for any $i$.
We emphasize that $(X, \Delta_i)$ is not necessarily log canonical in Theorem \ref{main-thm}.

Let us summarize the history of Theorem \ref{main-thm}. 
In \cite{fujita}, Takao Fujita established Theorem \ref{main-thm} under the assumption that 
$X$ is nonsingular, $U$ is a point, and $n=1$. 
More precisely, he proved that 
the positive part of the Zariski decomposition of 
$K_X+\Delta$ is semi-ample by using the notion of 
Sakai minimality
when $X$ is a nonsingular projective surface and 
$\Delta$ is a boundary $\mathbb Q$-divisor on $X$, that is, a 
$\mathbb Q$-divisor on $X$ whose 
coefficients are in $[0, 1]$. 
As an easy consequence, he 
obtained the above mentioned special case of Theorem \ref{main-thm}. 
In \cite{fujino}, Osamu Fujino established the minimal model program (MMP) and 
the abundance theorem for $\mathbb Q$-factorial 
log surfaces and log canonical surfaces over an 
algebraically closed field of characteristic zero in full generality. 
Theorem \ref{main-thm} with $n=1$ 
in characteristic zero follows immediately 
from the minimal model program and the abundance theorem 
for $\mathbb Q$-factorial log surfaces \cite{fujino}. 
We note that Fujino's approach based on \cite{fujino0} heavily 
depends on vanishing theorems of Kodaira type. 
Therefore, we can not directly apply his arguments in positive characteristic. 
Fortunately, in \cite{tanaka}, Hiromu Tanaka generalized the results in \cite{fujino} 
for $\mathbb Q$-factorial log surfaces and log canonical surfaces in positive characteristic. 
Consequently, we know that 
Theorem \ref{main-thm} holds true over any algebraically closed field when $n=1$.

As we mentioned above, 
we are now able to use 
the minimal model program and the abundance theorem 
for $\mathbb{Q}$-factorial log surfaces, 
which need not be log canonical.
In this paper, we prove Theorem \ref{main-thm} in full generality by 
using the minimal model program and the abundance theorem for 
$\mathbb Q$-factorial log surfaces 
established in \cite{fujino} and \cite{tanaka} (see also \cite{fujinotanaka}). 
We will use Shokurov's ideas in \cite{shokurov} in order to reduce Theorem \ref{main-thm} 
to the case when $K_X+\Delta_i$ is semi-ample over $U$ for every $i$. 
Note that we have to use the minimal model program and the 
abundance theorem for $\mathbb R$-divisors to carry out Shokurov's ideas. 

The contents of this paper are as follows. 
In Section \ref{sec2}, we recall the definition of log surfaces 
and collect some other basic definitions and notations.
In Section \ref{sec3}, which is the main part 
of this paper, we discuss a certain cone decomposition of  
the cone of pseudo-effective divisors.
See Lemma \ref{lem3.4} for details.
In Section \ref{sec4}, we reduce the proof of Theorem \ref{main-thm}  
to the case where $K_{X}+\Delta_{i}$ is semi-ample for every $i$.
To carry it out, 
we use some basic properties of 
graded rings and rational polytopes, which are rather technical.   
In Section \ref{sec5}, we complete the proof of 
Theorem \ref{main-thm} and Corollary \ref{cor1.2}.
In Section \ref{sec6}, which is Appendix, 
we collect some basic results on graded rings and rational polytopes 
used in Section \ref{sec4} for the reader's convenience.

Throughout this paper, we work over an 
algebraically closed field of any characteristic. 

\begin{ack}
The author would like to thank his supervisor Osamu Fujino for 
various suggestions 
and warm encouragement. 
He is grateful to the referee for many valuable comments. 
He also thanks 
his colleagues for discussions. 
\end{ack}

\section{Notations and definitions}\label{sec2}

In this section we collect some notations and definitions.
Let $k$ be an algebraically closed field. A {\it variety} is a 
separated integral scheme of finite type over $k$. 
Let $X$ be a normal variety and 
let $\pi :X \rightarrow U$ be a morphism from $X$ to a variety 
$U$. 

\begin{itemize}
\item[(1)]
${\rm WDiv}_{\mathbb{R}}(X)$ is 
the $\mathbb{R}$-vector space with canonical basis given 
by the prime divisors of $X$. 

\item[(2)]
An $\mathbb{R}$-divisor $D$ on $X$ is $\mathbb{Q}${\it -Cartier} 
(resp.~$\mathbb{R}${\it -Cartier})  if $D$ is a 
$\mathbb{Q}$-linear  (resp.~an $\mathbb{R}$-linear) combination of Cartier divisors.

\item[(3)]
$X$ is $\mathbb{Q}${\it -factorial} if every Weil divisor is $\mathbb{Q}$-Cartier. 

\item[(4)]
Two $\mathbb{R}$-divisors $D$ and $D'$ 
on $X$ are $\mathbb{Q}${\it -linearly equivalent} 
(resp.~$\mathbb{R}${\it -linearly equivalent}), 
denoted by $D \sim _{\mathbb{Q}} D'$ (resp.~$D \sim _{\mathbb{R}} D'$), 
 if $D-D'$ is a $\mathbb{Q}$-linear  (resp.~an $\mathbb{R}$-linear) 
combination of principal divisors.

\item[(5)]
Two $\mathbb{R}$-divisors $D$ and $D'$ on $X$ are 
$\mathbb{Q}${\it -linearly equivalent over} $U$ 
(resp.~$\mathbb{R}${\it -linearly equivalent over} $U$), 
denoted by $D \sim_{\mathbb{Q}, \,U} D'$ 
(resp.~$D \sim_{\mathbb{R}, \,U} D'$), if there exists a 
$\mathbb{Q}$-Cartier (resp.~an $\mathbb{R}$-Cartier) divisor $E$ on $U$ 
such that $D-D' \sim_{\mathbb{Q}} \pi^{*}E$ 
(resp.~$D-D' \sim_{\mathbb{R}} \pi^{*}E$). 

\item[(6)]
Two $\mathbb{R}$-divisors $D$ and $D'$ on $X$ are {\it numerically equivalent over} $U$ 
(or $\pi${\it -numerically equivalent}) if $D-D'$ is 
$\mathbb{R}$-Cartier and $(D-D' )\cdot C=0$ 
for every proper curve $C$ on 
$X$ contained in a fiber of $\pi$. 

\item[(7)]
An $\mathbb{R}$-divisor $D$ on $X$ 
is {\it pseudo-effective over} $U$ (or $\pi${\it -pseudo-effective}) 
if $D$ is $\pi$-numerically equivalent to the limit of effective
$\mathbb{R}$-divisors modulo numerically equivalence over $U$. 

\item[(8)]
An $\mathbb{R}$-Cartier divisor $D$ on $X$ is {\it nef over} $U$ (or $\pi${\it -nef}) if 
$(D \cdot C) \geq 0$ for every proper curve 
$C$ on $X$ contained in a fiber of $\pi$.

\item[(9)]
An $\mathbb{R}$-divisor $D$ is 
{\it semi-ample over} $U$ (or $\pi${\it -semi-ample}) 
if $D$ is an $\mathbb{R}_{\geq 0}$-linear 
combination of semi-ample Cartier divisors over $U$, or equivalently, 
there exists a morphism $f: X \rightarrow Y$ 
to a variety over $U$ such that $D$ is $\mathbb{R}$-linearly 
equivalent to the pullback of an ample $\mathbb{R}$-divisor over $U$. 

\item[(10)]
For a real number $\alpha$, 
its {\it round down} is the largest integer which is not greater than $\alpha$. 
It is denoted by $\lfloor \alpha \rfloor$. 
If $D= \sum \alpha_{i} D_{i}$ is 
an $\mathbb{R}$-divisor and the $D_{i}$ are distinct prime divisors, 
then the {\it round down} of $D$, 
denoted by $\lfloor D \rfloor$, is $\sum \lfloor \alpha _{i} \rfloor D_{i}$. 

\item[(11)] An 
$\mathbb{R}$-divisor $D$ 
on $X$ is a {\it boundary} $\mathbb{R}${\it -divisor} if $D$ is effective and
whose coefficients are not greater than one.

\item[(12)]
Let $K_{X}$ be the canonical 
divisor on $X$ and let $V$ be a finite dimensional affine subspace of 
${\rm WDiv}_{\mathbb{R}}(X)$. 
Then we define $\mathcal{E}_{U}(V)$ as 
$$\mathcal{E}_{U}(V)=\{ \Delta \in V \mid K_{X}+ \Delta \; {\rm is} \; 
\pi \mathchar`-{\rm pseudo}\mathchar`-{\rm effective} \}.$$ 

\item[(13)]
A pair $(X, \Delta)$, where $X$ is a normal variety and $\Delta$ is an effective 
$\mathbb{R}$-divisor on $X$, is 
said to be {\it log canonical} 
if $K_{X}+ \Delta$ is $\mathbb{R}$-Cartier and for any proper birational morphism
$f:Y \rightarrow X$ from a normal 
variety $Y$, every coefficient of $K_{Y}-f^{*}(K_{X}+ \Delta)$, where $K_{Y}$ is the canonical divisor on $Y$ such that 
$f_{*}K_{Y}=K_{X}$, is greater than or equal to $-1$.

\item[(14)]
Let $D^{\bullet}=(D_{1}, \, \cdots , D_{n})$ be an 
$n$-tuple of $\mathbb{Q}$-divisors on $X$. 
Then we define the sheaf of $\mathcal{O}_{U}$-algebra $\mathcal{R}(\pi, D^{\bullet})$ as
$$\mathcal{R}(\pi, D^{\bullet})
=\underset{(m_{1}, \, \cdots ,\, m_{n}) \in (\mathbb{Z}_{ \geq 0})^{n}}{\bigoplus}
\pi_{*} \mathcal{O}_{X}(\lfloor \sum_{i=1}^{n}m_{i}(K_{X}+D_{i}) \rfloor)$$
and call it {\it adjoint ring} of $(\pi:X\to U, D^\bullet$). 
We note that 
$\mathcal{R}(\pi, D^{\bullet})$ 
is a {\it finitely generated} $\mathcal{O}_{U}${\it -algebra} if 
there is an affine open
covering $\{ V_{i}={\rm Spec}\,A_{i} \}_{i \in I}$ of $U$ such that 
$\mathcal{R}(\pi, D^{\bullet}) \! \mid _{V_{i}}$ is 
the sheaf associated to a finitely generated $A_{i}$-algebra 
for every $i \in I$.
\end{itemize}

Next, we recall the  definition of log surfaces. 

\begin{defn}[Log surfaces]\label{defn2.1}
Let $X$ be a normal surface and let 
$\Delta$ be a boundary $\mathbb{R}$-divisor on $X$ 
such that $K_{X}+\Delta$ is $\mathbb{R}$-Cartier. 
Then the pair $(X, \Delta)$ is called a {\it log surface}.
\end{defn}

Finally, we recall the definition of weak log canonical models and minimal models of log surfaces.  

\begin{defn}[cf.~{Definition 3.6.1 \cite{bchm}}, Definition 3.6.7 \cite{bchm}]\label{def3.3}
Let $\pi : X \to U$ and $\pi ' :Y \to U$ be projective 
morphisms from a normal surface to a variety. 
Let $f:X \to Y$ be a birational morphism of normal 
surfaces over $U$ and let $D$ be an 
$\mathbb{R}$-Cartier divisor on $X$ such that $f_{*}D$ is also $\mathbb{R}$-Cartier. 
Then $f$ is $D${\it -non-positive} (resp.~$D${\it -negative}) if $E=D-f^{*}f_{*}D$ is an 
effective $f$-exceptional divisor 
(resp.~an effective $f$-exceptional divisor and the support of $E$ contains 
supports of all $f$-exceptional divisors). 
Let $(X,\,\Delta)$ be a log surface and $f:X \to Y$ be a birational morphism over $U$. 
Then $f$ is a {\it weak log canonical model} (resp.~{\it minimal model}) of $(X, \Delta)$ over $U$ if 
$f$ is $(K_{X}+\Delta)$-non-positive (resp.~$(K_{X}+\Delta)${\it -negative}) and 
$K_{Y}+f_{*}\Delta$ is nef over $U$. 

\end{defn}

\begin{rem}\label{rmk2.3}
Let $\pi : X \to U$, $\pi ' :Y \to U$ and $f:X\to Y$ be as above and let $V$ be a finite dimensional 
affine subspace in ${\rm WDiv}_{\mathbb{R}}(X)$. 
Then we can easily check that the set 
$$\{ \Delta \in V \mid f\;{\rm is\; a\; weak\; log\; canonical\; model\; of\; } 
(X, \Delta)\;{\rm over}\; U\}$$
is a closed convex subset in $V$.
\end{rem}

\section{Cone decomposition}\label{sec3}
In this section, we discuss a certain cone decomposition of  
the cone of pseudo-effective divisors by using
the minimal model program and the 
abundance theorem for $\mathbb Q$-factorial log surfaces. 
This cone decomposition will play a crucial role in Section \ref{sec4}. 
For the definition of rational polytopes and their faces, 
see Definition B \ref{defb6.1}.

\begin{lem}[cf.~\cite{shokurov}]\label{lem3.4}
Let $\pi : X \to U$ be a projective morphism 
from a normal $\mathbb{Q}$-factorial surface onto a 
quasi-projective variety. Let $V$ be a finite dimensional affine subspace of 
${\rm WDiv}_{\mathbb{R}}(X)$, which is defined over $\mathbb Q$, 
and let $V'$ be the set of  the
boundary $\mathbb{R}$-divisors on 
$X$ contained in $V$. Let $\mathcal{C}$ be a rational polytope in $V'$. 
Then there are finitely many proper birational 
morphisms $f_{i}: X \to Y_{i}$ over $U$ and finitely many 
rational polytopes $W_{i}$ such that 
$\mathcal{C} \cap \mathcal{E}_{U}(V)= 
\cup_{i}W_{i}$ and if $\Delta \in W_{i}$, 
then $f_{i}$ is a weak log canonical model of $(X, \Delta)$ over $U$. In particular, 
$\mathcal{C} \cap \mathcal{E}_{U}(V)$ is also a rational polytope.
\end{lem}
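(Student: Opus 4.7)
The plan is to cover $\mathcal{C}\cap\mathcal{E}_U(V)$ by finitely many rational polytopes, on each of which a single birational morphism serves as a weak log canonical model, and then to deduce that the union is itself a rational polytope by a convexity argument. The key local claim I would establish first is: for every $\Delta_0\in\mathcal{C}\cap\mathcal{E}_U(V)$ there exist a birational morphism $f_{\Delta_0}:X\to Y_{\Delta_0}$ over $U$ and a rational polytope $W_{\Delta_0}\subseteq \mathcal{C}$ whose relative interior (inside $\mathcal{C}\cap\mathcal{E}_U(V)$) contains $\Delta_0$, such that $f_{\Delta_0}$ is a weak log canonical model of $(X,\Delta)$ over $U$ for every $\Delta\in W_{\Delta_0}$. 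To produce $f_{\Delta_0}$ I would run the $(K_X+\Delta_0)$-MMP over $U$ for the $\mathbb{Q}$-factorial log surface $(X,\Delta_0)$, which is available by the work of Fujino and Tanaka in any characteristic. Since $K_X+\Delta_0$ is $\pi$-pseudo-effective, this MMP terminates at a minimal model $f_{\Delta_0}:X\to Y_{\Delta_0}$ with $K_{Y_{\Delta_0}}+f_{\Delta_0*}\Delta_0$ nef over $U$, and $Y_{\Delta_0}$ is again normal and $\mathbb{Q}$-factorial.

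I would then describe $W_{\Delta_0}$ as an intersection of two rational polyhedral conditions on $\Delta\in V$. The first is non-positivity: for every $\Delta$ the divisor $(K_X+\Delta)-f_{\Delta_0}^*f_{\Delta_0*}(K_X+\Delta)$ is automatically supported on $\Exc(f_{\Delta_0})$, and its coefficient along each $f_{\Delta_0}$-exceptional prime is an affine-linear function of $\Delta$ with rational coefficients; hence $(K_X+\Delta)$-non-positivity of $f_{\Delta_0}$ cuts out a rational polyhedron $P\subseteq V$ containing $\Delta_0$. The second is nefness of $K_{Y_{\Delta_0}}+f_{\Delta_0*}\Delta$ over $U$, and this is the main obstacle: a priori infinitely many curves on $Y_{\Delta_0}$ may meet $K_{Y_{\Delta_0}}+f_{\Delta_0*}\Delta_0$ in zero. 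The argument I have in mind applies the cone theorem of Fujino and Tanaka for $\mathbb{Q}$-factorial log surfaces to show that in a sufficiently small rational polytope neighborhood of $\Delta_0$ inside $\mathcal{C}\cap P$, only finitely many $(K_{Y_{\Delta_0}}+f_{\Delta_0*}\Delta_0)$-trivial extremal rays of $\overline{NE}(Y_{\Delta_0}/U)$ can become negative under perturbation, and imposing non-negativity against these rays cuts out a rational polytope $N\ni\Delta_0$ of divisors whose pushforward stays nef. Setting $W_{\Delta_0}:=\mathcal{C}\cap P\cap N$ completes the local claim.

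Finally I would globalize by compactness. The set $\mathcal{C}\cap\mathcal{E}_U(V)$ is compact, being the intersection of the compact rational polytope $\mathcal{C}$ with the closed convex set $\mathcal{E}_U(V)$. Extracting a finite subcover of the relative-open neighborhoods produced above yields finitely many pairs $(f_i,W_i)$ with $\mathcal{C}\cap\mathcal{E}_U(V)\subseteq\bigcup_i W_i$; the reverse inclusion is automatic because any $\Delta\in W_i$ admits a weak log canonical model $f_i$, which forces $K_X+\Delta$ to be $\pi$-pseudo-effective via $K_X+\Delta=f_i^*(K_{Y_i}+f_{i*}\Delta)+(\text{effective $f_i$-exceptional})$. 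Thus $\mathcal{C}\cap\mathcal{E}_U(V)=\bigcup_i W_i$ with the required morphisms. To conclude that this union is itself a rational polytope, I would observe that $\mathcal{C}\cap\mathcal{E}_U(V)$ is convex (intersection of two convex sets) and is expressed as a finite union of rational polytopes, hence coincides with the convex hull of the finite set of all their vertices, and is therefore a rational polytope.
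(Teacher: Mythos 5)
Your overall architecture (cover $\mathcal{C}\cap\mathcal{E}_U(V)$ by rational polytopes via compactness, run MMP at each center, conclude by convexity) matches the paper's broad strategy, and the last paragraph (convex finite union of rational polytopes is a rational polytope; $W_i\subseteq\mathcal{E}_U(V)$ since $K_X+\Delta=f_i^*(K_{Y_i}+f_{i*}\Delta)+\text{(effective exceptional)}$) is correct. However, your key local step has a genuine gap, and it is precisely the step the paper goes to considerable lengths to avoid.

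You assert that the cone theorem shows ``only finitely many $(K_{Y_{\Delta_0}}+f_{\Delta_0*}\Delta_0)$-trivial extremal rays of $\overline{NE}(Y_{\Delta_0}/U)$ can become negative under perturbation,'' so that the locus where $K_{Y_{\Delta_0}}+f_{\Delta_0*}\Delta$ is $U$-nef is cut out locally by finitely many rational affine inequalities. This does not follow from the cone theorem alone. The cone theorem controls the $(K+B)$-negative side of $\overline{NE}$, but the $(K_{Y_0}+f_{0*}\Delta_0)$-trivial face can be the whole cone (e.g.\ when $K_{Y_0}+f_{0*}\Delta_0\equiv_U 0$), which is not polyhedral in general. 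To extract finiteness of the rays that become negative as $\Delta$ ranges over a polytope, one needs an additional uniform input such as a length bound for extremal rays; such a bound is not obviously available here because $(X,\Delta)$ is merely a $\mathbb{Q}$-factorial log surface and is allowed to be non-log-canonical. Without it, your set $N$ is not known to be a rational polytope, and the whole local claim is unjustified. Note also that what you are trying to prove in this step is essentially the content of the lemma itself, so the argument teeters on circularity.

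The paper resolves exactly this by a different mechanism: induction on $\dim\mathcal{C}$, a separately-proved ``trivial point'' case, and the abundance theorem. After taking a minimal model $f_0:X\to Y_0$ of $(X,\Delta_0)$, it invokes abundance to get a contraction $\phi_0:Y_0\to Z_0$ with $K_{Y_0}+f_{0*}\Delta_0\sim_{\mathbb{R},Z_0}0$. Working over $Z_0$ puts one in the ``trivial point'' case, which reduces to $\partial\mathcal{C}$ and hence to lower dimension. The models $h_i:Y_0\to Y_i$ over $Z_0$ are then upgraded to models over $U$ by adding $n\phi_i^*A$ (for $A$ the $U$-ample divisor pulled back from $Z_0$) and taking small convex combinations toward $f_{0*}\Delta_0$; this converts ``nef over $Z_0$'' into ``nef over $U$'' without ever needing local polyhedrality of the nef locus directly. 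Your non-positivity polyhedron $P$, the compactness globalization, and the convexity conclusion can all be retained, but you should replace the cone-theorem finiteness claim by the abundance-plus-trivial-point reduction (or supply and justify a length-of-extremal-rays bound adequate for this non-lc surface setting).
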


\begin{proof}
Without loss of generality, we may assume that $\mathcal{C}$ spans $V$ 
by replacing $V$ with the span of $\mathcal{C}$. 
We proceed by induction on the dimension of $\mathcal{C}$.

If ${\rm dim} \,\mathcal{C}=0$, then 
we may assume that $\{ D\} = \mathcal{C} \cap \mathcal{E}_{U}(V)$. 
Then, by \cite[Theorem 1.2]{fujinotanaka}, 
there exists a minimal model $f: X \to Y$ of $(X, D)$ over $U$. 
By Definition \ref{def3.3}, a minimal model of $(X, D)$ over $U$ is a weak log 
canonical model of $(X, D)$ over $U$. 
Thus $f$ and $W=\{ D\}$ satisfy the conditions of the lemma. 
So we may assume that ${\rm dim}\,\mathcal{C}>0$. 

We show the assertion in  the lemma assuming that there is an 
$\mathbb{R}$-divisor $\Delta_{0} \in \mathcal{C} \cap \mathcal{E}_{U}(V)$ 
such that $K_{X}+\Delta_{0} \sim_{\mathbb{R}, \,U} 0$. 
We first show that 
there is a $\mathbb{Q}$-divisor 
$\Delta'$ in $\mathcal{C} \cap \mathcal{E}_{U}(V)$ 
such that $K_{X}+\Delta' \sim_{\mathbb{Q},\,U}0$.
Indeed, we may write 
$K_{X}+\Delta_{0}=\sum_{i=1}^{k}\alpha_{i}(f_{i})
+\sum_{j=1}^{l}\beta_{j}\pi^{*}F_{j}$ for some $\alpha_{i},\beta_{j} \in \mathbb{R}$, 
principal divisors $(f_{i})$ and Cartier divisors $F_{j}$ on $U$. 
Let $T$ be the finite dimensional $\mathbb{R}$-vector 
space in ${\rm WDiv}_{\mathbb{R}}(X)$ spanned by $(f_{i})$ and $\pi^{*}F_{j}$. 
Then $T$ is defined over $\mathbb{Q}$.
Therefore, the set
$$\{ \Delta \in \mathcal{C} \mid K_{X}+\Delta \in T \}$$
is a rational polytope and contains $\Delta_{0}$. 
In particular, this set is a non-empty rational polytope. 
Then a rational point $\Delta'$ in this set 
satisfies $K_{X}+\Delta' \sim_{\mathbb{R},\,U}0$, and so $\Delta' \in \mathcal{E}_{U}(V)$. 
Since $K_{X}+\Delta'$ is a $\mathbb{Q}$-divisor, 
we obtain $K_{X}+\Delta' \sim_{\mathbb{Q},\,U}0$.
By replacing $\Delta_{0}$ with $\Delta'$, we 
may assume that $\Delta_{0}$ is a $\mathbb{Q}$-divisor and 
$K_{X}+\Delta_{0} \sim_{\mathbb{Q}, \,U} 0$. Pick $D \in \mathcal{C}$ 
with $D \neq \Delta_{0}$. 
Then there is a divisor $D'$ on the boundary of $\mathcal{C}$ such that 
$D-\Delta_{0}= \lambda(D'-\Delta_{0})$ for some $0< \lambda \leq 1$. 
Then 
$$K_{X}+D = \lambda (K_{X}+D')+(1-\lambda)(K_{X}+\Delta_{0})
\sim_{\mathbb{R}, \,U} \lambda (K_{X}+D').$$ 
In particular, 
$K_{X}+D$ is pseudo-effective over $U$ if and only if 
$K_{X}+D'$ is pseudo-effective over $U$. 
Moreover, the pairs $(X, D)$ and $(X, D')$ have the same weak log canonical model over $U$ 
by \cite[Lemma 3.6.9]{bchm}. 
Let $\partial \mathcal{C}$ be the boundary of $\mathcal{C}$. 
Since $\partial \mathcal{C}$ 
consists of finitely many rational 
polytopes, there are finitely many 
proper birational morphisms 
$f_{i}: X \to Y_{i}$ over $U$ and finitely many rational polytopes $W'_{i}$ such that 
$\partial \mathcal{C} \cap \mathcal{E}_{U}(V)
= \cup_{i}W'_{i}$ and if $D \in W'_{i}$, 
then $f_{i}$ is a weak log canonical model of $(X, D)$ over $U$. 
Let $W_{i}$ be the cone spanned by $\Delta_{0}$ and $W'_{i}$. 
Then $f_{i}$ and $W_{i}$ satisfy the conditions of the lemma. 
So we are done.

We now prove the general case. 
Since $V'$ is compact and 
$\mathcal{C} \cap \mathcal{E}_{U}(V)$ is closed in $V'$, it is sufficient 
to prove the lemma for every $\Delta_{0} \in \mathcal{C} \cap \mathcal{E}_{U}(V)$ 
and a 
sufficiently small neighborhood 
$\mathcal{C}_{0}$ of $\Delta_{0}$ in $\mathcal{C}$, which is also 
a rational polytope. 
By \cite[Theorem 1.2]{fujinotanaka}, 
there exists a minimal model $f_{0}: X\to Y_{0}$ of $(X, \Delta_{0})$ over $U$. 
Since a minimal model of 
$(X, \Delta_{0})$ over $U$ is $(K_{X}+\Delta_{0})$-negative, possibly shrinking 
$\mathcal{C}_{0}$, we may 
assume that for any $\Delta \in \mathcal{C}_{0}$, $f_{0}$ is $(K_{X}+\Delta)$-non-positive. 
We put $W=f_{0*}(V)$ and $\mathcal{C}'$
=$f_{0*}(\mathcal{C}_{0})$. 
Then $\mathcal{C}' \subset W$ 
is a rational polytope containing 
$f_{0*}\Delta_{0}$ and ${\rm dim}\,\mathcal{C}' \leq {\rm dim} \,\mathcal{C}$. 

By 
\cite[Theorem 8.1]{fujino} and \cite[Theorem 6.7]{tanaka}, $K_{Y_0}
+f_{0*}\Delta_{0}$ is semi-ample over $U$. 
Then there exists a 
projective morphism 
$\phi_{0}: Y_{0} \to Z_{0}$ onto a quasi-projective variety $Z_{0}$ over 
$U$ and an ample $\mathbb{R}$-divisor  $A$ over $U$ such that 
$K_{Y_{0}}+f_{0*}\Delta_{0} \sim_{\mathbb{R}} \phi_{0}^{*}A$. 
Since $K_{Y_{0}}+f_{0*}\Delta_{0} \sim_{\mathbb{R},\, Z_{0}}0$, 
there are finitely many 
proper birational morphisms $h_{i}: Y_{0} \to Y_{i}$ over $Z_{0}$ and 
finitely many rational 
polytopes $W_{i}$ such 
that $\mathcal{C}' \cap \mathcal{E}_{Z_{0}}(W)= \cup_{i}W_{i}$ and 
if $D \in W_{i}$, then $h_{i}$ is a weak log canonical model of $(Y_{0}, D)$ over $Z_{0}$. 
Possibly shrinking $\mathcal{C}_{0}$, 
we may assume that $f_{0*}\Delta_{0} \in W_{i}$ for any $i$. 
Let $\phi_{i}: Y_{i} \to Z_{0}$ be the induced morphism. 
Pick a vertex $\Delta$ of $W_{i}$. 
Then $K_{Y_{i}}+h_{i*}\Delta$ is nef over $Z_{0}$ and by \cite[Theorem 8.1]{fujino} and 
\cite[Theorem 6.7]{tanaka}, $K_{Y_{i}}+h_{i*}\Delta$ is semi-ample over $Z_{0}$. 
Therefore $K_{Y_{i}}+h_{i*}\Delta+n\phi_{i}^{*}A$ 
is semi-ample over $U$ for a large integer $n$. 
In particular $K_{Y_{i}}+h_{i*}\Delta+n\phi_{i}^{*}A$ is nef over $U$.
If $0<\epsilon <1/(n+1)$, then
$$h_{i*}(K_{Y_{0}}+\epsilon \Delta+(1-\epsilon)f_{0*}\Delta_{0})
\sim_{\mathbb{R}}\epsilon(K_{Y_{i}}+h_{i*}\Delta)+(1-\epsilon)\phi_{i}^{*}A$$
is nef over $U$. 
Considering all vertices of all 
$W_{i}$, we may find a sufficiently 
small neighborhood $\mathcal{C}''$ of $f_{0*}\Delta_{0}$ in $\mathcal{C}'$, which is a rational 
polytope, such that if 
$D' \in \mathcal{C}'' \cap W_{i}$, then $h_{i}$ 
is a weak log canonical model of $(Y_{0}, D')$ over $U$. 
Then $K_{Y_{i}}+h_{i*}D'$ is nef over $U$. 
In particular,  
$K_{Y_{i}}+h_{i*}D'$ is pseudo-effective over $U$. 
Since $h_{i}$ is $(K_{Y_{0}}+D')$-non-positive, 
$K_{Y_{0}}+D'$ is also pseudo-effective over $U$. 
On the other hand, a pseudo-effective divisor over 
$U$ is also pseudo-effective over $Z_{0}$. 
Therefore, 
$\cup_{i}(\mathcal{C}'' \cap W_{i} )=
\mathcal{C}'' \cap \mathcal{E}_{Z_{0}}(W)=\mathcal{C}'' \cap \mathcal{E}_{U}(W)$. 
Set $\widetilde{\mathcal{C}}=(f_{0*})^{-1} (\mathcal{C}'') \cap \mathcal{C}_{0}$ and 
$\widetilde{W_{i}}=(f_{0*})^{-1}(W_{i} \cap \mathcal{C}'') \cap \mathcal{C}_{0}$. 
Then $\widetilde{\mathcal{C}}$ is a 
neighborhood of $\Delta_{0}$ in $\mathcal{C}_{0}$ and a rational polytope. 
If $\Delta \in \widetilde{\mathcal{C}} \cap \mathcal{E}_{U}(V)$, then 
$f_{0*}\Delta \in \mathcal{C}'' \cap \mathcal{E}_{U}(W)$. 
Therefore we have $\Delta \in \cup_{i}\widetilde{W_{i}}$. 
On the other hand, if $\Delta \in \cup_{i}\widetilde{W_{i}}$, then 
$f_{0*}\Delta \in \mathcal{C}'' \cap \mathcal{E}_{U}(W)$. 
In particular, $K_{Y_{0}}+f_{0*}\Delta$ is pseudo-effective over $U$.
Since $\Delta \in \mathcal{C}_{0}$, $f_{0}$ is $(K_{X}+\Delta)$-non-positive. 
Therefore $\Delta \in  \mathcal{E}_{U}(V)$. 
Thus, we see that $\widetilde{\mathcal{C}} \cap \mathcal{E}_{U}(V)= \cup_{i}\widetilde{W_{i}}$. 
We can also check that if $\Delta \in \widetilde{W_{i}}$, then $h_{i} \circ f_{0}$ is a weak log canonical 
model of $(X,\Delta)$ over $U$.
So we are done. 
\end{proof}

\section{Reduction to the special case}\label{sec4}

In this section, we reduce Theorem \ref{main-thm} to the case 
where $K_{X}+\Delta_{i}$ is semi-ample over $U$ 
for every $1 \leq i \leq n$.

We put $D_{i}=K_{X}+\Delta_{i}$ for every $i$. 
We note that $\pi:X\to U$ is projective since $X$ is a 
$\mathbb Q$-factorial surface 
(see \cite[Lemma 2.2]{fujino}). 
By taking the Stein factorization, 
we may assume that $\pi_{*}\mathcal{O}_{X}=\mathcal{O}_{U}$. 
Furthermore, we may also assume that $U$ is an affine variety 
by the definition of finitely generated $\mathcal{O}_{U}$-algebras. 
Set $A=H^{0}(X, \mathcal{O}_{X}) =H^{0}(U, \mathcal{O}_{U})$.
Then it is sufficient to prove that
$$\mathcal{R}(\pi, \Delta^{\bullet})
=\underset{(m_{1}, \, \cdots , \, m_{n}) \in (\mathbb{Z}_{ \geq 0})^{n}}{\bigoplus}
H^{0}(X, \mathcal{O}_{X} (\lfloor \sum_{i=1}^{n}m_{i}D_{i} \rfloor ))$$ 
is a finitely generated $A$-algebra. 
Let $V$ (resp.~$\mathcal{C}$) be the affine subspace (resp.~convex hull) in 
${\rm WDiv}_{\mathbb{R}}(X)$ spanned by $\Delta_{1}, \, \cdots , \Delta_{n}$.

\begin{lem}\label{lem4.1a}
To prove Theorem \ref{main-thm}, we may assume that ${\rm dim}\,V={\rm dim}\, \mathcal{C}=n-1$, 
\end{lem}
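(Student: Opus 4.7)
The plan is to argue by induction on $n$. The base case $n=1$ is immediate: then $V$ is a single point, so $\dim V = 0 = n-1$ automatically. Assume the reduction is valid for tuples of length less than $n$, and consider an $n$-tuple $(\Delta_{1}, \ldots, \Delta_{n})$ with $\dim V \leq n-2$. The goal is to reduce the finite generation of $\mathcal{R}(\pi, \Delta^{\bullet})$ to that of an adjoint ring for a tuple of length strictly less than $n$, at which point the induction hypothesis will yield the case $\dim V = n-1$.

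Since the $\Delta_{i}$ are $\mathbb{Q}$-divisors that are affinely dependent in ${\rm WDiv}_{\mathbb{R}}(X)$, there exist rational numbers $a_{1}, \ldots, a_{n}$, not all zero, with $\sum a_{i}=0$ and $\sum a_{i}\Delta_{i}=0$. Splitting the $a_{i}$ into positive and negative parts and clearing denominators yields non-negative integer vectors $(b_{1}, \ldots, b_{n})$ and $(c_{1}, \ldots, c_{n})$ with disjoint supports and equal total weight $N=\sum b_{i}=\sum c_{i}$ such that $\sum b_{i}\Delta_{i}=\sum c_{i}\Delta_{i}$. Since $\sum b_{i}K_{X}=N K_{X}=\sum c_{i}K_{X}$, it follows that $\sum b_{i}D_{i}=\sum c_{i}D_{i}$. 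In particular, although $(b_{i})$ and $(c_{i})$ are distinct elements of $(\mathbb{Z}_{\geq 0})^{n}$, they index the same graded piece of $\mathcal{R}(\pi, \Delta^{\bullet})$.

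Appealing to the results on finitely generated graded algebras and rational polytopes collected in Section \ref{sec6}, this identification allows one to view $\mathcal{R}(\pi, \Delta^{\bullet})$ as graded by a finitely generated submonoid of a lattice of rank $n-1$, and to identify it (after passing to an appropriate Veronese subring if necessary) with the adjoint ring of an $(n-1)$-tuple of $\mathbb{Q}$-divisors obtained by contracting the original tuple along the relation vector. Finite generation of the original ring as an $A$-algebra is equivalent to finite generation of this contracted ring, and the induction hypothesis then completes the reduction.

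The main technical obstacle arises precisely when the affine relation has mixed signs, so that no single $\Delta_{i}$ lies in the convex hull of the remaining ones and one cannot simply discard a generator. Here the statements in Section \ref{sec6} about Veronese subalgebras and monoid quotients carry the essential combinatorial bookkeeping: they convert the numerical identity $\sum b_{i}D_{i}=\sum c_{i}D_{i}$ into an honest reduction in the number of indeterminates of the grading, which is the form in which the induction can be closed.
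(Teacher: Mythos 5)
Your proposal correctly identifies the two key structural facts — that $\dim V \leq n-2$ forces an affine relation $\sum b_i \Delta_i = \sum c_i \Delta_i$ with disjoint non-negative integer supports, and that the genuine difficulty is the mixed-sign case where no single $\Delta_i$ lies in the convex hull of the rest. It also correctly sets up the induction on $n$. But the actual reduction step — ``view $\mathcal{R}(\pi,\Delta^\bullet)$ as graded by a finitely generated submonoid of a lattice of rank $n-1$, and identify it (after a Veronese) with the adjoint ring of an $(n-1)$-tuple obtained by contracting along the relation vector'' — is a gap, not a proof. Regrading by the image monoid $M = \{\sum m_i D_i : m_i \geq 0\}$ produces a \emph{quotient} of $\mathcal{R}(\pi,\Delta^\bullet)$ (one identifies the several homogeneous pieces lying over the same element of $M$), and finite generation of a quotient does not imply finite generation of the original. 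Worse, when the relation has mixed signs, $M$ is typically not a free monoid of rank $n-1$ nor a Veronese of one (e.g.\ for the square $\Delta_1+\Delta_3 = \Delta_2+\Delta_4$, the resulting cone is non-simplicial), so there is no $(n-1)$-tuple whose adjoint ring one can point to. Section \ref{sec6} contains no ``monoid quotient'' lemma: Lemma A \ref{cora2} is only a Veronese statement that does not change the number of grading variables, and Lemma A \ref{lema3} lets one \emph{drop} a coordinate only after the grading has been massaged into the form $\bigoplus R_{(a_1,\ldots,a_n)+b\boldsymbol{e}}$, which requires some divisor in the tuple to be a convex combination of the others.

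The paper closes exactly this gap with an extra move you are missing. It first reduces (Step \ref{step1}) to the case where the $\Delta_i$ are distinct vertices of $\mathcal{C}$, applying Lemma A \ref{cora2} and Lemma A \ref{lema3} when some $\Delta_i$ is already a convex combination of the rest. Then (Step \ref{step2}) it invokes Lemma B \ref{lemb1} to produce a rational point $\Delta_{n+1} \in \bigcap_j \mathcal{C}_j$, appends it to the tuple, and only then can Lemma A \ref{lema3} be applied twice to discard two of the original vertices, dropping $n+1$ divisors to $n-1$. This ``augment, then drop twice'' manoeuvre is the mechanism that handles mixed signs. It is worth noting that the barycenter $\frac{1}{N}\sum b_i\Delta_i = \frac{1}{N}\sum c_i\Delta_i$ extracted from your relation actually \emph{is} such a point in $\bigcap_j \mathcal{C}_j$ (since $\operatorname{supp}(b)$ and $\operatorname{supp}(c)$ are disjoint, for every $j$ at least one of the two representations avoids $\Delta_j$), so your observation could replace Lemma B \ref{lemb1}; but without then spelling out the two applications of Lemma A \ref{lema3}, the argument as written does not close.
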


\begin{proof}
We prove it with several steps.

\begin{step}\label{step1}
In this step, we reduce Theorem \ref{main-thm} to the case that 
$\Delta_{i}$ is a vertex of 
$\mathcal{C}$ for every $i$ and $\Delta_{i} \neq \Delta_{j}$ for any $i \neq j$. 

Suppose that there is an index $i$ such that $\Delta_{i}$ is not a vertex of $\mathcal{C}$ or 
there are two indices $i$ and $j$ such that $i\neq  j$ and $\Delta_{i}=\Delta_{j}$.   
By changing indices, 
we may write $\Delta_{n}= \sum_{i=1}^{k}(a_{i}/q)\Delta_{i}$ for some $1\leq k\leq n-1$,
$a_{i} \in \mathbb{Z}_{> 0}$ and $q \in \mathbb{Z}_{>0}$ such that  $\sum_{i=1}^{k}(a_{i}/q)=1$. 
Then we have $qD_{n}=\sum_{i=1}^{k}a_{i}D_{i}$.
By Lemma A \ref{cora2}, it is sufficient to prove the finite generation of 
$$\underset{(m_{1}, \, \cdots, \, m_{n}) \in (\mathbb{Z}_{\geq 0})^{n}}{\bigoplus}
H^{0}(X, \mathcal{O}_{X} (\lfloor( \sum_{i=1}^{k}m_{i}a_{i}D_{i}
+\sum_{i=k+1}^{n-1}m_{i}D_{i}
+m_{n}qD_{n} )\rfloor))$$
as an $A$-algebra. 
By Lemma A \ref{lema3}, it is sufficient to prove that
$$\underset{(m_{1},\,\cdots \overset{\overset{\overset{j}{\vee}}{}}{} \cdots ,\, m_{n}) 
\in (\mathbb{Z}_{\geq 0})^{n-1}}{\bigoplus}
H^{0}(X, \mathcal{O}_{X} (\lfloor (\sum_{\substack{{i=1} \\ 
{i\neq j}}}^{k}m_{i}a_{i}D_{i} +\sum_{i=k+1}^{n-1}m_{i}D_{i}
+m_{n}qD_{n})\rfloor))
$$
is a finitely generated $A$-algebra for every $1 \leq j \leq k$ and moreover we can reduce it to the finite generation of 
\begin{equation*}
\begin{split}
&\underset{(m_{1}, \, \cdots, \, m_{j-1},\,m_{j+1},\, \cdots ,\,m_{n}) 
\in (\mathbb{Z}_{\geq 0})^{n-1}}{\bigoplus}
H^{0}(X, \mathcal{O}_{X} (\lfloor (\sum_{i=1,\,i \neq j}^{n-1}m_{i}D_{i} +m_{n}D_{n})\rfloor))
\end{split}
\end{equation*}
as an $A$-algebra for any $1\leq j \leq k$ by using Lemma A \ref{cora2} again. By repeating this discussion, we may assume that $\Delta_{i}$ is a vertex of 
$\mathcal{C}$ for any $i$ and $\Delta_{i} \neq \Delta_{j}$ for any $i \neq j$.
\end{step}

\begin{step}\label{step2}
Assume that $\Delta_{i}$ is a vertex of 
$\mathcal{C}$ for every $i$ and $\Delta_{i} \neq \Delta_{j}$ for any $i \neq j$. 
In addition, suppose that ${\rm dim}\,V+1<n$. 
The goal of this step is to decrease $n$, the number of the boundary $\mathbb{Q}$-divisors, 
under the above assumption.

Let $\mathcal{C}_{j}$ be the convex hull 
spanned by $\Delta_{1}, \, \cdots,\Delta_{j-1},\Delta_{j+1}, \, 
\cdots, \Delta_{n}$ for every $j$. We can pick a $\mathbb{Q}$-divisor 
$\Delta_{n+1}$ such that 
$\Delta_{n+1} \in \mathcal{C}_{j}$ for any $1 \leq j \leq n$ by 
Lemma B \ref{lemb1}. 
Set $D_{n+1}=K_{X}+\Delta_{n+1}$. 
If 
$$\underset{(m_{1}, \, \cdots , \, m_{n+1}) \in (\mathbb{Z}_{ \geq 0})^{n+1}}{\bigoplus}
H^{0}(X, \mathcal{O}_{X} (\lfloor \sum_{i=1}^{n+1}m_{i}D_{i} \rfloor ))$$
is a finitely generated $A$-algebra, 
then 
it is obvious that 
$\mathcal{R}(\pi, \Delta^{\bullet})$ is also 
a finitely generated $A$-algebra. 
Moreover, by Lemma A \ref{cora2} and Lemma A \ref{lema3}, it is sufficient to prove that
$$\underset{(m_{1}, \, \cdots, \, m_{j-1},\,m_{j+1}, \,
\cdots ,\,m_{n+1}) \in (\mathbb{Z}_{\geq 0})^{n}}{\bigoplus}
H^{0}(X, \mathcal{O}_{X} (\lfloor \sum_{i=1,\, i \neq j}^{n+1}m_{i}D_{i} \rfloor))$$
is a finitely generated $A$-algebra for any $1\leq j \leq n$. 
Since $\Delta_{n+1} \in \mathcal{C}_{j}$, 
by using Lemma A \ref{cora2} and Lemma A \ref{lema3} again, we can reduce it to the finite generation of
$$\underset{(m_{1},\,\cdots \overset{\overset{\overset{j}{\vee}}{}}{} \cdots 
\overset{\overset{\overset{j'}{\vee}}{}}{} \cdots ,\, m_{n+1}) 
\in (\mathbb{Z}_{\geq 0})^{n-1}}{\bigoplus}
H^{0}(X, \mathcal{O}_{X} (\lfloor \sum_{i=1,\, i \neq j,\,j'}^{n+1}m_{i}D_{i} \rfloor))$$
as an $A$-algebra for any $1\leq j, j' \leq n$ with $j\ne j'$. 
\end{step}
\begin{step}
By repeating the discussion of Step \ref{step1} and Step \ref{step2}, 
we may assume that ${\rm dim}\,V+1=n$. 
So we are done.
\end{step}
\end{proof}

By Lemma \ref{lem4.1a}, we may assume that  ${\rm dim}\,V={\rm dim}\,\mathcal{C}=n-1$,  
or equivalently, any point of $V$ is represented uniquely by the
$\mathbb{R}$-linear combination 
of $\Delta_{1}, \, \cdots, \Delta_{n}$, where the sum of coefficients is equal to one.
Next we prove the following lemma. 
For simplex coverings, see Remark B \ref{remb4}. 

\begin{lem}\label{lem4.1}
Suppose that there is a finite $(n-1)$-dimensional rational simplex covering 
$\{ \Sigma_{\lambda}\}_{\lambda \in \Lambda}$ of $\mathcal{C}$ such that 

\begin{itemize}
\item[(i)]
$\mathcal{C}= \cup_{\lambda \in \Lambda} \Sigma_{\lambda}$, and

\item[(ii)] 
if $\Psi_{\lambda 1}, \, \cdots , \Psi_{\lambda n}$ 
are the vertices of $\Sigma_{\lambda}$, then 
$$R^{\lambda}=\underset{(m_{1}, \, \cdots,\; m_{n}) \in (\mathbb{Z}_{\geq 0})^{n}}
{\bigoplus} 
H^{0}(X, \mathcal{O}_{X}(\lfloor \sum_{i=1}^{n} m_{i}(K_{X}+\Psi_{\lambda i}) \rfloor))$$
is a finitely generated $A$-algebra for any $\lambda$.
\end{itemize}
Then $\mathcal{R}(\pi, \Delta^{\bullet})$ is also a finitely generated $A$-algebra.
\end{lem}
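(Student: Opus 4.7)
The plan is to identify each $R^\lambda$, after taking a suitable Veronese, with a finitely generated graded $A$-subalgebra of $\mathcal{R}(\pi, \Delta^{\bullet})$ corresponding to the inclusion $\Sigma_{\lambda} \subset \mathcal{C}$, and then to invoke a general ``gluing'' principle from the Appendix to pass, via the covering hypothesis (i), to finite generation of the whole ring.

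Because ${\rm dim}\,V=n-1$ and $\Delta_{1}, \ldots, \Delta_{n}$ are affinely independent, each vertex of $\Sigma_{\lambda}$ has a unique barycentric expression $\Psi_{\lambda i} = \sum_{j=1}^{n} c_{ij}^{\lambda} \Delta_{j}$ with $c_{ij}^{\lambda} \in \mathbb{Q}_{\geq 0}$ and $\sum_{j} c_{ij}^{\lambda} = 1$. I would pick a positive integer $q$ clearing denominators for every $\lambda, i, j$ and set $a_{ij}^{\lambda} = q\, c_{ij}^{\lambda} \in \mathbb{Z}_{\geq 0}$; then $\sum_{j} a_{ij}^{\lambda} = q$ and $q(K_{X} + \Psi_{\lambda i}) = \sum_{j} a_{ij}^{\lambda} D_{j}$. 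By Lemma A \ref{cora2}, the $q$-Veronese $(R^{\lambda})^{(q)}$ is again a finitely generated $A$-algebra, and its $(m_{1}, \ldots, m_{n})$-th piece equals $H^{0}(X, \mathcal{O}_{X}(\sum_{j} (\sum_{i} m_{i} a_{ij}^{\lambda}) D_{j}))$, which is precisely the graded piece of $\mathcal{R}(\pi, \Delta^{\bullet})$ at multi-degree $(\sum_{i} m_{i} a_{i1}^{\lambda}, \ldots, \sum_{i} m_{i} a_{in}^{\lambda})$. Because the matrix $(a_{ij}^{\lambda})$ is invertible over $\mathbb{Q}$ (as $\Sigma_{\lambda}$ is $(n-1)$-dimensional, its vertices being affinely independent in $V$), this realizes $(R^{\lambda})^{(q)}$ as a finitely generated graded $A$-subalgebra $T_{\lambda}$ of $\mathcal{R}(\pi, \Delta^{\bullet})$ supported on a sub-semigroup $S_{\lambda} \subset \mathbb{Z}_{\geq 0}^{n}$.

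Identifying $\mathbb{R}_{\geq 0}^{n} \setminus \{0\}$ with the cone over $\mathcal{C}$ by barycentric coordinates, the cone $\mathbb{R}_{\geq 0} \cdot S_{\lambda}$ corresponds exactly to the cone over $\Sigma_{\lambda}$; condition (i) therefore translates into $\mathbb{R}_{\geq 0}^{n} = \bigcup_{\lambda} \mathbb{R}_{\geq 0} \cdot S_{\lambda}$. The proof will then be completed by applying a general result from the Appendix on graded rings and rational polytopes, to the effect that a graded $A$-algebra indexed by $\mathbb{Z}_{\geq 0}^{n}$ is finitely generated once it is covered, in this cone-theoretic sense, by finitely many finitely generated graded $A$-subalgebras.

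The main obstacle will be this last step: the sub-semigroups $S_{\lambda}$ need not individually exhaust the lattice points in their cones, so one must account for the lattice-theoretic gap between $\bigcup_{\lambda} S_{\lambda}$ and $\mathbb{Z}_{\geq 0}^{n}$. This is precisely the kind of gluing statement that the Appendix lemmas on graded rings and rational polytopes are designed to provide; once taken for granted, the proof reduces to assembling the pieces constructed above.
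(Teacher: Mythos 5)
Your proposal captures the right setup---barycentric coordinates, clearing denominators, Veronese truncations, and the ring homomorphisms $\tau_{\lambda}: (R^{\lambda})^{(q)}\to R$ corresponding to the inclusions $\Sigma_{\lambda}\subset\mathcal{C}$---and you even correctly identify the ``main obstacle.'' But then you assert that this obstacle is resolved by ``a general result from the Appendix on graded rings and rational polytopes, to the effect that a graded $A$-algebra is finitely generated once it is covered, in this cone-theoretic sense, by finitely many finitely generated graded subalgebras.'' No such lemma exists in the Appendix. Lemma A \ref{lema1} is the closest thing, but it requires you to exhibit a fixed truncation $R_{(d)}$ sitting \emph{inside} a finitely generated algebra; it does not, by itself, reconcile the lattice gap between $\bigcup_{\lambda}S_{\lambda}$ and $\mathbb{Z}_{\geq 0}^{n}$. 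That reconciliation is precisely the content of the paper's proof, and your proposal leaves it open.

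What is missing is the map in the \emph{other} direction. The paper writes \emph{both} barycentric change-of-basis relations: not only $\Psi_{\lambda i}=\sum_{j}(b_{\lambda ij}/q)\Delta_{j}$ with $b_{\lambda ij}\in\mathbb{Z}_{\geq 0}$ (which is what you use to build $\tau_{\lambda}$), but also $\Delta_{i}=\sum_{j}(a_{\lambda ij}/p)\Psi_{\lambda j}$ with $a_{\lambda ij}\in\mathbb{Z}$ (not necessarily nonnegative). Given $\boldsymbol{m}=(m_{1},\ldots,m_{n})$ with $m=\sum m_{i}>0$, one chooses $\lambda'$ with $\sum(m_{i}/m)\Delta_{i}\in\Sigma_{\lambda'}$ and observes that the integers $\sum_{i}a_{\lambda' ij}m_{i}$ are \emph{nonnegative} precisely because the barycentric point lies inside $\Sigma_{\lambda'}$. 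This gives a degree-wise identification $\phi_{\boldsymbol{m}\lambda'}$ of the $\boldsymbol{m}$-th piece of $R_{(pq)}$ with a piece of $R^{\lambda'}_{(q)}$, and the inverse-matrix computation $\sum_{j}a_{\lambda' ij}b_{\lambda' jk}=pq\,\delta_{ik}$ shows $\tau_{\lambda'}\circ\phi_{\boldsymbol{m}\lambda'}=\mathrm{id}$. From this one deduces that $R_{(pq)}$ is contained in the $A$-subalgebra of $R$ generated by the images under $\tau_{\lambda'}$ of the finitely many generators of the $R^{\lambda'}_{(q)}$, and Lemma A \ref{lema1} finishes. In short: your map $\tau_{\lambda}$ goes the wrong way to invoke Lemma A \ref{lema1} directly, and without constructing the maps $\phi_{\boldsymbol{m}\lambda'}$ and verifying the sign condition on the $a_{\lambda' ij}m_{i}$ there is no cited result you can fall back on---that verification \emph{is} the lemma.
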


\begin{proof}
By hypothesis, for any $\lambda \in \Lambda$ and any $1\leq i \leq n$, we may write 
$\Delta_{i}=\sum_{j=1}^{n}(a_{\lambda ij}/p)\Psi_{\lambda j}$ and
$\Psi_{\lambda i}= \sum_{j=1}^{n}(b_{\lambda ij}/q)\Delta_{j}$, 
where $a_{\lambda ij} \in \mathbb{Z}$, 
$b_{\lambda ij} \in \mathbb{Z}_{\geq 0}$, $p,\;q \in \mathbb{Z}_{> 0}$ and 
$\sum_{j=1}^{n}(a_{\lambda ij}/p)=\sum_{j=1}^{n}(b_{\lambda ij}/q)=1$. 
Then
$$\Delta_{i}=\sum_{j=1}^{n}\frac{a_{\lambda ij}}{p}\Psi_{\lambda j}=
\sum_{j=1}^{n}\frac{a_{\lambda ij}}{p}\sum_{k=1}^{n}\frac{b_{\lambda jk}}{q}\Delta_{k}=
\sum_{k=1}^{n}\Bigl(\sum_{j=1}^{n}\frac{a_{\lambda ij} 
b_{\lambda jk} }{pq}\Bigr)\Delta_{k}.$$
Since every $\Delta_{i}$ is 
represented uniquely by the $\mathbb{R}$-linear combination of 
$\Delta_{1}, \, \cdots, \Delta_{n}$, where the sum of coefficients is equal to one, 
we have 
$\sum_{j=1}^{n}(a_{\lambda ij} b_{\lambda jk}/pq) =\delta_{ik}$, where 
$\delta_{ik}$ is Kronecker delta. 

Pick $(m_{1}, \, \cdots, m_{n}) \in 
(\mathbb{Z}_{\geq 0})^{n}$ such that $m=\sum_{i=1}^{n}m_{i}>0$. 
Then there  exists a $\lambda' \in \Lambda$ 
such that $\sum_{i=1}^{n}(m_{i}/m)\Delta_{i} \in \Sigma_{\lambda'}$. 
Then 
$$\sum_{i=1}^{n}pqm_{i}\Delta_{i}=mpq\sum_{i=1}^{n}(m_{i}/m)\Delta_{i}$$
is uniquely represented by the $\mathbb{R}_{\geq 0}$-linear combination of 
$\Psi_{\lambda' 1}, \, \cdots, \Psi_{\lambda' n}$ where the sum of the coefficients is equal to $mpq$. 
On the other hand, 
$$\sum_{i=1}^{n}pqm_{i}\Delta_{i}
=\sum_{i=1}^{n}pqm_{i}\sum_{j=1}^{n}\frac{a_{\lambda' ij}}{p}\Psi_{\lambda' j}=
\sum_{j=1}^{n}\Bigl(\sum_{i=1}^{n}qa_{\lambda' ij}m_{i} \Bigr)\Psi_{\lambda' j}$$
and $\sum_{i=1}^{n}qa_{\lambda' ij}m_{i} \in \mathbb{Z}$. 
Therefore $\sum_{i=1}^{n}qa_{\lambda' ij}m_{i} \in \mathbb{Z}_{\geq 0}$ and so
\begin{equation*}
\begin{split}
&H^{0}(X, \mathcal{O}_{X} (\lfloor \sum_{i=1}^{n}m_{i}pq(K_{X}+\Delta_{i}) \rfloor))\\
&=H^{0}(X, \mathcal{O}_{X} (\lfloor \sum_{j=1}^{n}q(\sum_{i=1}^{n}
a_{\lambda' ij}m_{i})(K_{X}+\Psi_{\lambda' j})\rfloor))
\end{split}
\end{equation*}
can be identified with the $A$-module of homogeneous 
elements of certain degree in $R^{\lambda'}_{(q)}$. 
Let $\phi_{\boldsymbol{m}\lambda'}:H^{0}(X, \mathcal{O}_{X} 
(\lfloor \sum_{i=1}^{n}m_{i}pq(K_{X}+\Delta_{i}) \rfloor)) \to R^{\lambda'}_{(q)}$, 
where $\boldsymbol{m}=(m_{1}, \, \cdots , m_{n})$, be 
the natural morphism of $A$-modules. 
Similarly, for any $(m'_{1}, \, \cdots, m'_{n}) \in (\mathbb{Z}_{\geq 0})^{n}$, since 
$$\sum_{i=1}^{n}m'_{i}q\Psi_{\lambda' i}
=\sum_{j=1}^{n}(\sum_{i=1}^{n}b_{\lambda' ij} m'_{i})\Delta_{j},$$ 
where $\sum_{i=1}^{n}b_{\lambda' ij} m'_{i} \in \mathbb{Z}_{\geq 0}$, 
we get the natural ring homomorphism $\tau_{\lambda'}: R^{\lambda'}_{(q)} \to R$. 
By the definition of $\phi_{\boldsymbol{m}\lambda'}$ 
and $\tau_{\lambda'}$, for 
any $f \in H^{0}(X, \mathcal{O}_{X} (\lfloor \sum_{i=1}^{n}m_{i}pq(K_{X}+\Delta_{i}) \rfloor))$, 
$\tau_{\lambda'} \circ \phi_{\boldsymbol{m}\lambda'} (f)=f$. 

By the hypothesis, $R^{\lambda'}$ is a finitely generated $A$-algebra. 
Then $R_{(q)}^{\lambda'}$ is also a finitely generated $A$-algebra by Lemma A \ref{cora2}.
Let $g_{\lambda' 1}, \, \cdots , 
g_{\lambda' k_{\lambda'}}$ be the generator of $R^{\lambda'}_{(q)}$. 
Then there exists an $A$-polynomial $F \in A[X_{1}, \, \cdots ,X_{k_{\lambda'}}]$ such that 
$\phi_{\boldsymbol{m}\lambda'} (f)=F(g_{\lambda' 1}, \, \cdots, g_{\lambda' k_{\lambda'}})$. 
Then we have $f=F(\tau_{\lambda'}(g_{\lambda' 1}), \,
\cdots, \tau_{\lambda'} (g_{\lambda' k_{\lambda'}}))$ and so 
$R_{(pq)}$ is generated by 
$\tau_{\lambda'}(g_{\lambda' l})$, where $\lambda' \in \Lambda$ and 
$1 \leq l \leq k_{\lambda'}$. 
Then the lemma follows from Lemma A \ref{lema1}.
\end{proof}

\begin{lem}\label{lem4.3}
To prove Theorem \ref{main-thm}, we may assume that each $K_{X}+\Delta_{i}$ is semi-ample over $U$.
\end{lem}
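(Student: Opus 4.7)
The strategy is to combine the cone decomposition from Lemma \ref{lem3.4} with the simplex covering reduction of Lemma \ref{lem4.1}, and then pass to suitable birational models on which the abundance theorem produces semi-ampleness.

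After invoking Lemma \ref{lem4.1a} to assume $\dim V = \dim \mathcal{C} = n-1$, I apply Lemma \ref{lem3.4} to $\mathcal{C}$ to obtain finitely many proper birational morphisms $f_i: X \to Y_i$ over $U$ and rational polytopes $W_i$ with $\mathcal{C} \cap \mathcal{E}_U(V) = \bigcup_i W_i$, such that $f_i$ is a weak log canonical model of $(X, \Delta)$ over $U$ for every $\Delta \in W_i$; in particular $\mathcal{C} \cap \mathcal{E}_U(V)$ is a rational subpolytope of $\mathcal{C}$. Using standard properties of rational polytopes (cf.~Remark B \ref{remb4}), I produce a finite rational $(n-1)$-simplex covering $\{\Sigma_\lambda\}_{\lambda \in \Lambda}$ of $\mathcal{C}$ compatible with the $W_i$'s and with the boundary of $\mathcal{E}_U(V)$, arranged so that each $\Sigma_\lambda$ is either (a) entirely contained in some $W_i$, or (b) has $\Sigma_\lambda \cap \mathcal{E}_U(V)$ as a (possibly empty) proper face contained in some $W_i$.

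In case (a), write $f_\lambda = f_i$. By Definition \ref{def3.3} each $E_{\lambda j} := (K_X + \Psi_{\lambda j}) - f_\lambda^*(K_{Y_i} + f_{i*}\Psi_{\lambda j})$ is an effective $f_\lambda$-exceptional divisor, while by the abundance theorem for $\mathbb{Q}$-factorial log surfaces (\cite[Theorem 8.1]{fujino}, \cite[Theorem 6.7]{tanaka}) each $K_{Y_i} + f_{i*}\Psi_{\lambda j}$ is semi-ample over $U$. Writing $\sum_j m_j (K_X + \Psi_{\lambda j}) = f_\lambda^*\sum_j m_j (K_{Y_i} + f_{i*}\Psi_{\lambda j}) + \sum_j m_j E_{\lambda j}$ and using the fact that an effective exceptional divisor contributes trivially to the pushforward of the associated rounded-down sheaf, one obtains a canonical isomorphism of graded $A$-algebras
$$R^\lambda \;\cong\; \bigoplus_{(m_1,\ldots,m_n) \in \mathbb{Z}_{\geq 0}^n} H^0\!\left(Y_i,\, \mathcal{O}_{Y_i}\!\left(\Bigl\lfloor \sum_j m_j (K_{Y_i} + f_{i*}\Psi_{\lambda j}) \Bigr\rfloor\right)\right).$$
Under the assumption that Theorem \ref{main-thm} holds when every adjoint divisor is semi-ample, the right-hand side is a finitely generated $A$-algebra, so $R^\lambda$ is finitely generated. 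In case (b), if some $m_j > 0$ for a vertex $\Psi_{\lambda j}$ outside $\mathcal{E}_U(V)$, then convexity of $\mathcal{E}_U(V)$ and the face structure force the convex combination $\sum_j (m_j / \sum_k m_k)\Psi_{\lambda j}$ to lie in $\Sigma_\lambda \setminus \mathcal{E}_U(V)$, so $\sum_j m_j (K_X + \Psi_{\lambda j})$ is not $\pi$-pseudo-effective and the corresponding graded piece vanishes. Thus $R^\lambda$ reduces to the adjoint ring indexed only by lattice points supported on the good face, which is already covered by case (a) after reindexing.

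Applying Lemma \ref{lem4.1} completes the reduction. The main obstacle is establishing the displayed isomorphism rigorously: since fractional parts and the $f_\lambda^*$-operation interact nontrivially with round-down, one must argue that $\lfloor f_\lambda^* \{ \sum_j m_j (K_{Y_i} + f_{i*}\Psi_{\lambda j}) \} + \sum_j m_j E_{\lambda j} \rfloor$ is purely effective exceptional and hence invisible under pushforward to $Y_i$. This is a consequence of the effectiveness of $E_{\lambda j}$ together with normality of $Y_i$, but it is the key technical point of the reduction.
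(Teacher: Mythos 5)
Your proposal follows essentially the same route as the paper: apply Lemma \ref{lem3.4} to get the cone decomposition, Lemma B \ref{lemb2} to get a compatible rational simplex covering, Lemma \ref{lem4.1} to reduce to a single simplex, observe that vertices outside $\mathcal{E}_U(V)$ contribute nothing to the graded ring, and then pass to the common weak log canonical model $f_j:X\to Y_j$ and invoke abundance. Your case distinction (a)/(b) is just a slightly different packaging of the paper's remark that $H^0(X,\mathcal{O}_X(\lfloor\sum m_i(K_X+\Delta_i)\rfloor))\neq 0$ forces $m_i=0$ for all $i$ with $\Delta_i\notin\mathcal{E}_U(V)$.

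The place where the proofs diverge, and where your write-up is incomplete, is exactly the isomorphism you flag at the end as ``the key technical point.'' You attempt to establish a graded isomorphism at the level of the round-down adjoint rings directly, which forces you to confront the fact that $\lfloor\cdot\rfloor$ does not commute with sums, pullbacks, or pushforwards. That identity can in fact be proved -- for instance by characterizing $H^0(X,\mathcal{O}_X(\lfloor D\rfloor))$ as $\{f\in K(X):\mathrm{div}_X(f)+D\geq 0\}$, pushing forward, and using that $D-f_j^*f_{j*}D=\sum m_i E_i\geq 0$ is exceptional -- but you leave it as an acknowledged gap. The paper instead sidesteps the whole issue: pick $d>0$ with $d(K_X+\Delta_i)$ and $d(K_{Y_j}+f_{j*}\Delta_i)$ Cartier, prove the isomorphism only for the $d$-th truncation (where there are no round-downs and the projection formula applies cleanly to effective exceptional Cartier divisors), and then invoke Lemma A \ref{cora2} to conclude that the untruncated ring is finitely generated iff its truncation is. That maneuver replaces the analytic care you would need about fractional parts with a purely ring-theoretic reduction, and is the main thing you should import into your argument to close the gap.
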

\begin{proof}
Recall that $\mathcal{C}$ is the rational polytope spanned by $\Delta_{1}, \, \cdots , \Delta_{n}$ 
and $V$ is the finite dimensional affine subspace of ${\rm WDiv}_{\mathbb{R}}(X)$ spanned by $\mathcal{C}$ 
such that ${\rm dim}V=n-1$.
By Lemma \ref{lem3.4}, 
there are finitely many proper birational morphisms $f_{i}: X \to Y_{i}$ over $U$ and 
finitely many rational polytopes $W_{i}$ 
such that $\mathcal{C} \cap \mathcal{E}_{U}(V)= \cup_{i}W_{i}$ and 
if $\Delta \in W_{i}$, then $f_{i}$ is a weak 
log canonical model of $(X, \Delta)$ over $U$.
By applying Lemma B \ref{lemb2} to 
$\mathcal{C}$, $\mathcal{C} \cap \mathcal{E}_{U}(V)$ and $W_{i}$, 
there is a finite $(n-1)$-dimensional rational simplex covering 
$\{ \Sigma_{\lambda} \}_{\lambda \in \Lambda}$ of $\mathcal{C}$ such that 
$\mathcal{C}= \cup_{\lambda}\Sigma_{\lambda}$ and for any $\lambda \in \Lambda$, 
$\mathcal{E}_{U}(V) \cap \Sigma_{\lambda}$ is a face 
of $\Sigma_{\lambda}$ and is contained in 
$W_{j}$ for some $j$. 
By Lemma \ref{lem4.1} it is sufficient to prove the case where 
$\Delta_{1}, \, \cdots, \Delta_{n}$ are vertices of $\Sigma_{\lambda}$ for some $\lambda$. 
By changing indices if necessary, we may assume that 
$\Delta_{1}, \, \cdots, \Delta_{k} \in \mathcal{E}_{U}(V)$ 
and $\Delta_{k+1}, \, \cdots, \Delta_{n} \notin \mathcal{E}_{U}(V)$. 
We note that if $m_{i}\geq 0$ for every $1\leq i \leq n$, then 
$H^{0}(X, \mathcal{O}_{X} ( \lfloor \sum_{i=1}^{n}m_{i}(K_{X}+\Delta_{i}) \rfloor)) \neq0$ 
implies that $m_{k+1}= \cdots =m_{n}=0$. 
Indeed, in this case we see that  
$m_{i}$ is zero for all $i$ or $\sum_{i=1}^{n}(m_{i}/m)\Delta_{i}$ 
is in $\mathcal{E}_{U}(V)$ when $m=\sum_{i=1}^{n}m_{i}>0$.
Therefore we may assume that $\Delta_{i} \in \mathcal{E}_{U}(V)$ for any $1 \leq i \leq n$. 
Then we can find $j$ such that $\Delta_{i} \in W_{j}$ for any $1 \leq i \leq n$. 
Pick a positive integer $d$ such that 
$d(K_{X}+\Delta_{i})$ and $d(K_{Y_{j}}+f_{j*}\Delta_{i})$ are both Cartier for any $1 \leq i \leq n$. 
Then
$$H^{0}(X, \mathcal{O}_{X} 
(\sum_{i=1}^{n}m_{i}d(K_{X}+\Delta_{i}))) 
\cong H^{0}(Y_{j}, \mathcal{O}_{Y_{j}}(\sum_{i=1}^{n}m_{i}d(K_{Y_{j}}+f_{j*}\Delta_{i})))$$ 
and by Lemma A \ref{cora2}, it is sufficient to prove that 
$$\underset{(m_{1}, \, \cdots , \, m_{n}) \in (\mathbb{Z}_{ \geq 0})^{n}}{\bigoplus}
H^{0}(Y_{j}, \mathcal{O}_{Y_{j}} ( \lfloor\sum_{i=1}^{n}m_{i}(K_{Y_{j}}+f_{j*}\Delta_{i}) \rfloor ))$$ 
is a finitely generated $H^{0}(Y_{j}, \mathcal{O}_{Y_{j}})$-algebra.  
Since $f_{j}:X \to Y_{j}$ is a weak log canonical model of $(X,\Delta_{i})$ over $U$ for any $i$, 
by replacing 
$X$ with $Y_{j}$ and 
$K_{X}+\Delta_{i}$ with 
$K_{Y_{j}}+f_{j*}\Delta_{i}$ respectively, we may assume that 
$K_{X}+\Delta_{i}$ is nef over $U$ for any $1 \leq i \leq n$.
Then by \cite[Theorem 8.1]{fujino} and \cite[Theorem 6.7]{tanaka}, 
$K_X+\Delta_i$
is semi-ample over $U$ for any $1\leq i \leq n$. 
\end{proof}

\section{Proof of the main theorem and corollary}\label{sec5}

Now we complete the proof of  
Theorem \ref{main-thm} and Corollary \ref{cor1.2}. 

\begin{proof}[Proof of Theorem \ref{main-thm}]
It is sufficient to prove that
$$\mathcal{R}(\pi, \Delta^{\bullet})
=\underset{(m_{1}, \, \cdots , \, m_{n}) \in (\mathbb{Z}_{ \geq 0})^{n}}{\bigoplus}
H^{0}(X, \mathcal{O}_{X} ( \lfloor \sum_{i=1}^{n}m_{i}D_{i} \rfloor ))$$ 
is a finitely generated $H^{0}(X, \mathcal{O}_{X})$-algebra, 
where $D_{i}=K_{X}+\Delta_{i}$ is semi-ample over $U$ for any $1\leq i\leq n$ 
and $U$ is an affine variety. 
This follows immediately from the following lemma. 
\end{proof}

\begin{lem}\label{lem5.1}
Let $\pi: X \to {\rm Spec}\,A$ 
be a proper morphism from a normal variety to an affine variety and 
let $D_{1}, \, \cdots, D_{n}$ be $\pi$-semi-ample $\mathbb{Q}$-Cartier $\mathbb{Q}$-divisors on $X$.  
Then
$$R=\underset{(m_{1}, \, \cdots, \; m_{n}) \in(\mathbb{Z}_{\geq 0})^{n}}{\bigoplus} 
H^{0}(X, \mathcal{O}_{X}(\lfloor \sum_{i=1}^{n}m_{i}D_{i} \rfloor ))$$
is a finitely generated $A$-algebra. 
\end{lem}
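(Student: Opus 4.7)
The plan is to reduce to the case of Cartier $\pi$-free divisors via a Veronese, factor $\pi$ through the product of the $U$-projective models of the $D_i$, and then combine an external-tensor-product computation with classical Serre-type ampleness. Concretely, choose $d \in \mathbb{Z}_{>0}$ with $dD_i$ Cartier and $\pi$-free for every $i$; by Lemma A \ref{cora2}, the finite generation of $R$ is equivalent to that of its $(d, \ldots, d)$-Veronese, so after replacing $D_i$ by $dD_i$ we may assume each $D_i$ is a $\pi$-free Cartier divisor and the floor $\lfloor \cdot \rfloor$ becomes redundant. For each $i$, the $\pi$-free linear system of $D_i$ produces a morphism $\phi_i: X \to Y_i$ over $U = {\rm Spec}\,A$ to a projective $A$-scheme $Y_i$ with $D_i = \phi_i^* A_i$ for an ample Cartier divisor $A_i$ on $Y_i$.

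Next, form the product morphism $\phi := (\phi_1, \ldots, \phi_n) : X \to Y := Y_1 \times_U \cdots \times_U Y_n$ and put $B_i := {\rm pr}_i^* A_i$, so that $D_i = \phi^* B_i$ and $\mathcal{F} := \phi_* \mathcal{O}_X$ is a coherent sheaf on $Y$. The projection formula yields
$$H^0\bigl(X, \textstyle\sum_i m_i D_i\bigr) \cong H^0\bigl(Y, \mathcal{F} \otimes \mathcal{O}_Y(\textstyle\sum_i m_i B_i)\bigr).$$
Let $R' := \bigoplus_{\boldsymbol{m}} H^0(Y, \sum_i m_i B_i)$. Since each $B_i$ is pulled back from a single factor, a K\"unneth-type computation for line bundles on a product of projective $A$-schemes identifies $R'$ with $R_1 \otimes_A \cdots \otimes_A R_n$, where $R_i := \bigoplus_m H^0(Y_i, m A_i)$. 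Each $R_i$ is a finitely generated $A$-algebra (standard finite generation of the section ring of an ample divisor on a projective scheme over an affine base), hence so is $R'$.

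The remaining step, which I expect to be the main obstacle, is to show that $R$ is finitely generated as an $R'$-module; combined with the previous paragraph this yields finite generation of $R$ as an $A$-algebra. The key point is that $H := B_1 + \cdots + B_n$ is ample on $Y$ over $U$ (sum of pullbacks of ample divisors from the factors of the product), so applying Serre's theorem to the coherent sheaf $\mathcal{F}$ gives finite generation of the diagonal module $\bigoplus_m H^0(Y, \mathcal{F}(mH))$ over the diagonal Veronese $\bigoplus_m H^0(Y, mH) \subset R'$. To upgrade this to the multigraded statement, I would decompose $(\mathbb{Z}_{\geq 0})^n$ into finitely many cosets of $\mathbb{Z}_{\geq 0} \cdot (1, \ldots, 1)$, identify the slice of $R$ over each coset with the diagonal section module of a suitable twist of $\mathcal{F}$, and conclude using the appendix lemmas (e.g.~Lemma A \ref{lema1}).
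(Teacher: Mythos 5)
Your opening reduction (Veronese to make each $D_i$ Cartier and $\pi$-free, via Lemma A~\ref{cora2}) matches the paper, and the ingredients you invoke afterwards --- semi-ample fibrations, the projection formula, Serre ampleness --- are all sound in isolation. But the step you flag as ``the main obstacle'' is indeed broken: $(\mathbb{Z}_{\geq 0})^n$ is \emph{not} a union of finitely many translates of $\mathbb{Z}_{\geq 0}\cdot(1,\dots,1)$ (already for $n=2$ the diagonals $\{(m,m+k):m\geq 0\}$, $k\geq 0$, are pairwise disjoint and infinite in number), so the proposed coset decomposition does not exist and the passage from the single-graded Serre statement to the multigraded claim is not made. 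There is also an earlier soft spot: the K\"unneth identification $R'\cong R_1\otimes_A\cdots\otimes_A R_n$ for $H^0$ on a fiber product over $\mathrm{Spec}\,A$ requires flatness (or Tor-vanishing) hypotheses over $A$ that are not available here, since the images $Y_i$ of the semi-ample fibrations need not be flat over $U$. Without that identification, showing $R'$ itself is finitely generated is essentially the lemma you are trying to prove, applied to $Y$ in place of $X$, so the argument risks circularity.

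The paper handles the multigraded-to-single-graded passage with a single mechanism that does exactly what you need and avoids both difficulties: set $\mathcal{E}=\bigoplus_{i=1}^n\mathcal{O}_X(D_i)$ and let $p:\mathbf{P}_X(\mathcal{E})\to X$ be the associated projective bundle. Since each $D_i$ is $\pi$-free, $\mathcal{O}_{\mathbf{P}}(1)$ is $\pi$-free as well, and the decomposition
$p_*\mathcal{O}_{\mathbf{P}}(l)=\operatorname{Sym}^l\mathcal{E}=\bigoplus_{m_1+\cdots+m_n=l}\mathcal{O}_X(\sum_i m_iD_i)$
identifies the whole multigraded ring $R$ with the \emph{single}-graded section ring $\bigoplus_l H^0(\mathbf{P},\mathcal{O}_{\mathbf{P}}(l))$ of a semi-ample line bundle on a projective $\mathrm{Spec}\,A$-scheme. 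That reduces everything to the $n=1$ case, which is the standard consequence of Serre's theorem applied to the coherent pushforward along the semi-ample fibration --- the very argument you already have for the diagonal. So the fix for your proof is to replace the product-of-$Y_i$ construction and the coset decomposition with this projective-bundle identity; there is no need to factor through $\prod Y_i$ or to invoke K\"unneth at all.
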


\begin{proof}
By Lemma A \ref{cora2}, 
we may assume that $D_{i}$ is Cartier and free over ${\rm Spec}\,A$ for any $1\leq i \leq n$. 
Thus we may assume that $D_{i}$ is base point free for any $1 \leq i \leq n$. 
Then there is a surjective 
morphism $\overset{r_{i}}{\oplus} \mathcal{O}_{X} \to \mathcal{O}_{X}(D_{i})$ for some 
positive integer $r_{i}$ for every $i$. 
Suppose that $n \geq 2$. 
Set $\mathcal{E}=\mathcal{O}_{X}(D_{1}) 
\oplus \cdots \oplus \mathcal{O}_{X}(D_{n})$. 
Then there is a surjective morphism 
$\overset{r}{\oplus} \mathcal{O}_{X} \to \mathcal{E}$, where 
$r= \sum_{i=1}^{n}r_{i}$. 
Let $p: \boldsymbol{P}_{X}(\mathcal{E}) \to X$ 
be the projective bundle over $X$ associated to $\mathcal{E}$. 
Then there is a natural surjective 
morphism $p^{*}\mathcal{E} \to \mathcal{O}_{\boldsymbol{P}_{X}(\mathcal{E})}(1)$. 
Therefore there exists a surjective morphism 
$\overset{r}{\oplus} \mathcal{O}_{\boldsymbol{P}_{X}(\mathcal{E})} 
\to \mathcal{O}_{\boldsymbol{P}_{X}(\mathcal{E})}(1)$ and so 
$\mid$$\mathcal{O}_{\boldsymbol{P}_{X}(\mathcal{E})}(1)$$\mid$ is base point free.  
Moreover, there is a canonical 
isomorphism of the graded $\mathcal{O}_{X}$-algebras 
$$\underset{l \in \mathbb{Z}_{\geq 0}}{\bigoplus}p_{*} 
\mathcal{O}_{\boldsymbol{P}_{X}(\mathcal{E})}(l) 
\cong \underset{l\in \mathbb{Z}_{\geq 0}}{\bigoplus} 
\left( \underset{m_{1}+\cdots+m_{n}=l}{\bigoplus}
\mathcal{O}_{X}(\sum_{i=1}^{n}m_{i}D_{i}) \right).$$
Therefore
$$R \cong \underset{l \in \mathbb{Z}_{\geq 0}}{\bigoplus} H^{0}
(\boldsymbol{P}_{X}(\mathcal{E}), \mathcal{O}_{\boldsymbol{P}_{X}(\mathcal{E})}(l))$$
as $A$-algebras. 
So we may assume that $n=1$ and 
then the lemma is clear. 
\end{proof}

Let us prove Corollary \ref{cor1.2}. 

\begin{proof}[Proof of Corollary \ref{cor1.2}]
Let $f:Y \to X$ be a resolution of $X$. 
Then we may write 
$$K_{Y}+\Gamma_{i} = f^{*}(K_{X}+\Delta_{i})+E_{i},$$
where $\Gamma_{i} \geq 0$ and 
$E_{i} \geq 0$ have no common components, $f_{*}\Gamma_{i}=\Delta_{i}$ and 
$E_{i}$ is $f$-exceptional. 
Then $\Gamma_{i}$ is a $\mathbb{Q}$-divisor for any $i$ and 
$$\mathcal{R}(\pi, \Delta^{\bullet}) \cong \mathcal{R}(\pi \circ f, \Gamma^{\bullet})$$
as graded $\mathcal{O}_{U}$-algebras.
Moreover, by the definition of 
log canonical pairs, 
$\Gamma_{i}$ is a boundary $\mathbb{Q}$-divisor for every $i$.
Therefore we can reduce it to Theorem \ref{main-thm}.
\end{proof}

\section{Appendix}\label{sec6}

In this section we collect some basic results for the reader's convenience. 

\subsection*{Appendix A. Graded Ring}

In this part, let $k$ be an algebraically 
closed field and let $A$ be a finitely generated $k$-algebra 
such that $A$ is an integral domain. 
Let 
$$R=\underset{(a_{1}, \, \cdots , \, a_{n}) 
\in (\mathbb{Z}_{\geq 0})^{n}}{\bigoplus} R_{(a_{1}, \, \cdots , \, a_{n})}$$
be a graded $A$-algebra such that $R$ is an
integral domain with $R_{(0 , \, \cdots , \,0)}=A$ and let 
$R_{(d)}$ be the $d$-th truncation of $R$. 
More precisely, 
$$R_{(d)}=\bigoplus_{(a_{1}, \, \cdots , \, a_{n}) 
\in (\mathbb{Z}_{\geq 0})^{n}}R_{(da_{1}, \, \cdots, \, da_{n})}. $$ 
For any $\boldsymbol{a} \in (\mathbb{Z}_{\geq 0})^{n}$, we identify 
$R_{\boldsymbol{a}}$ with a homogeneous part of $R$ by the natural inclusion 
$R_{\boldsymbol{a}} \hookrightarrow R$ of $A$-modules. 
For any $f \in R_{\boldsymbol{a}}$, 
we define the $degree$ of $f$ as ${\rm deg}(f)=\boldsymbol{a}$. 

We introduce two well known results. 
For the proofs, see \cite[Proposition 1.2.2]{adhl} and \cite[Proposition 1.2.4]{adhl}.

\begin{lema}\label{lema1}
Suppose that there are $x_{1}, \, \cdots , x_{r} \in R$ 
such that $R_{(d)}$ is a $A$-subalgebra of $R'=A[x_{1}, \, \cdots , x_{r}]$ 
for some positive integer $d$. 
Then $R$ is a finitely generated $A$-algebra.
\end{lema}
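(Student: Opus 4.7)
The plan is to reduce the finite generation of $R$ over $A$ to showing that $R$ is a finitely generated module over the Noetherian ring $R' = A[x_{1}, \ldots, x_{r}]$. First I would observe that we may assume the $x_{i}$ are multi-homogeneous with respect to the $(\mathbb{Z}_{\geq 0})^{n}$-grading on $R$: replacing each $x_{i}$ by its homogeneous components enlarges the generating set without changing $R'$. After this reduction $R'$ is a graded $A$-subalgebra of $R$, and since $A$ is a finitely generated $k$-algebra and thus Noetherian, $R'$ is itself Noetherian by Hilbert's basis theorem.

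The key step is a residue-class decomposition modulo $d$. For each $\boldsymbol{b}\in (\mathbb{Z}/d)^{n}$ set
$$R^{(\boldsymbol{b})}\;=\;\bigoplus_{\boldsymbol{a}\equiv \boldsymbol{b}\, (\mathrm{mod}\,d)} R_{\boldsymbol{a}},$$
so that $R=\bigoplus_{\boldsymbol{b}} R^{(\boldsymbol{b})}$ is a direct sum indexed by the finite set $(\mathbb{Z}/d)^{n}$. If $R^{(\boldsymbol{b})}\neq 0$, choose a non-zero homogeneous $f_{\boldsymbol{b}}\in R^{(\boldsymbol{b})}$ of some multi-degree $\boldsymbol{a}_{\boldsymbol{b}}$. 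A direct multi-degree computation gives
$$f_{\boldsymbol{b}}^{\,d-1}\cdot R^{(\boldsymbol{b})} \;\subseteq\; R_{(d)} \;\subseteq\; R',$$
since for any homogeneous $g\in R^{(\boldsymbol{b})}$ of multi-degree $\boldsymbol{c}$ one has $(d-1)\boldsymbol{a}_{\boldsymbol{b}}+\boldsymbol{c} \equiv d\boldsymbol{b} \equiv \boldsymbol{0}\pmod d$. Using that $R$ is a domain, this yields the inclusion $R^{(\boldsymbol{b})}\subseteq f_{\boldsymbol{b}}^{-(d-1)}R'$ inside $\mathrm{Frac}(R)$.

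Summing over the finitely many $\boldsymbol{b}$ produces
$$R\;\subseteq\;\sum_{\boldsymbol{b}} f_{\boldsymbol{b}}^{-(d-1)} R',$$
which is a finitely generated $R'$-submodule of $\mathrm{Frac}(R)$. Since $R'$ is Noetherian and $R$ is itself an $R'$-submodule of this finitely generated $R'$-module, $R$ is finitely generated as an $R'$-module; combined with $R'$ being a finitely generated $A$-algebra, $R$ is a finitely generated $A$-algebra. The main obstacle in this plan is the multi-degree bookkeeping that produces $f_{\boldsymbol{b}}^{\,d-1} R^{(\boldsymbol{b})}\subseteq R_{(d)}$ and the passage to $\mathrm{Frac}(R)$; this is precisely where the hypothesis $R_{(d)}\subseteq R'$ and the integrality of $R$ enter in an essential way.
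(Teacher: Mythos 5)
Your proof is correct, and it is essentially the standard argument for this kind of Veronese finite generation statement (the paper itself does not reproduce a proof but defers to \cite[Proposition 1.2.2]{adhl}, whose argument is based on the same residue-class-modulo-$d$ decomposition). One small remark: the opening reduction to multi-homogeneous generators is harmless but unnecessary, since the only property of $R'$ you use afterward is that it is a Noetherian subring of $R$ containing $R_{(d)}$, which holds already for $R'=A[x_1,\ldots,x_r]$ without homogenizing; also, replacing the $x_i$ by their homogeneous components in general enlarges $R'$ rather than leaving it unchanged, though this does not affect the argument.
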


\begin{lema}[cf.~{\cite[Corollary 1.2.5]{adhl}}]\label{cora2}
For any $$\boldsymbol{d}=(d_{1}, \, \cdots , d_{n}) 
\in (\mathbb{Z}_{> 0})^{n},$$ the graded 
ring 
$$R_{[\boldsymbol{d}]}=
\underset{(a_{1}, \, \cdots , \, a_{n}) 
\in (\mathbb{Z}_{\geq 0})^{n}}{\bigoplus} R_{(d_{1}a_{1}, \, \cdots ,\, d_{n}a_{n})}$$
is a finitely generated $A$-algebra if and only if $R$ is a finitely generated $A$-algebra.
\end{lema}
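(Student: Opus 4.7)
The plan is to establish the equivalence in two directions; the forward direction is a routine application of a Gordan-type lemma, while the backward direction is the substantive one and depends crucially on the integral domain hypothesis on $R$.

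For the direction ``$R$ finitely generated $\Rightarrow$ $R_{[\boldsymbol{d}]}$ finitely generated'', I would pick a finite set of homogeneous $A$-algebra generators $x_1,\ldots,x_r$ of $R$ with multi-degrees $\boldsymbol{w}_1,\ldots,\boldsymbol{w}_r\in(\mathbb{Z}_{\geq 0})^n$. The submonoid
$$\Omega=\Bigl\{\boldsymbol{e}\in(\mathbb{Z}_{\geq 0})^r\;\Bigm|\;\sum_{i=1}^r e_i\boldsymbol{w}_i\in d_1\mathbb{Z}_{\geq 0}\times\cdots\times d_n\mathbb{Z}_{\geq 0}\Bigr\}$$
is finitely generated, e.g.\ by Gordan's lemma (or equivalently by Dickson's lemma). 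The monomials $x_1^{e_1}\cdots x_r^{e_r}$ for $\boldsymbol{e}$ running through a finite generating set of $\Omega$ then generate $R_{[\boldsymbol{d}]}$ as an $A$-algebra.

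For the direction ``$R_{[\boldsymbol{d}]}$ finitely generated $\Rightarrow$ $R$ finitely generated'', I would argue via a coset decomposition. Since $R_{[\boldsymbol{d}]}$ is finitely generated over the noetherian ring $A$, it is itself noetherian. Extend the grading of $R$ formally to $K=\mathbb{Z}^n$ by declaring the pieces indexed outside $(\mathbb{Z}_{\geq 0})^n$ to be zero, and set $K'=d_1\mathbb{Z}\times\cdots\times d_n\mathbb{Z}\subseteq K$; the index $N=[K:K']=d_1\cdots d_n$ is finite. Decompose $R=\bigoplus_{\boldsymbol{c}\in K/K'}R_{(\boldsymbol{c})}$ as an $R_{[\boldsymbol{d}]}$-module along the cosets, where $R_{(\boldsymbol{c})}$ collects the homogeneous pieces of $R$ whose multi-degrees lie in the coset $\boldsymbol{c}$. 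It then suffices to show each of the finitely many $R_{(\boldsymbol{c})}$ is a finitely generated $R_{[\boldsymbol{d}]}$-module, since then $R$ is a finitely generated $R_{[\boldsymbol{d}]}$-module, hence a finitely generated $A$-algebra.

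The key trick, and where the integral domain hypothesis enters, is a shift argument. If $R_{(\boldsymbol{c})}=0$ there is nothing to show; otherwise pick any nonzero homogeneous $f\in R_{(\boldsymbol{c})}$. Since the exponent of the finite abelian group $K/K'$ divides $N$, the element $g:=f^{N-1}$ is homogeneous of multi-degree in the inverse coset $-\boldsymbol{c}$, and $g\neq 0$ because $R$ is a domain. Multiplication by $g$ then defines an $R_{[\boldsymbol{d}]}$-linear injection $R_{(\boldsymbol{c})}\hookrightarrow R_{[\boldsymbol{d}]}$, and noetherianity of $R_{[\boldsymbol{d}]}$ forces the image, and hence $R_{(\boldsymbol{c})}$ itself, to be finitely generated over $R_{[\boldsymbol{d}]}$. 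I expect the main obstacle to lie in this backward direction: cleanly setting up the coset decomposition of $R$ as a $\mathbb{Z}^n$-graded object (given that the original grading is by the semigroup $(\mathbb{Z}_{\geq 0})^n$ rather than a group) and producing the nonzero shifting element $g$ is exactly where both the finiteness of $K/K'$ and the assumption that $R$ is an integral domain become essential.
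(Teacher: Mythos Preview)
Your argument is correct in both directions. Note, however, that the paper does not actually supply its own proof of this lemma: it simply states the result as well known and refers the reader to \cite[Proposition 1.2.2 and Proposition 1.2.4]{adhl} for the details. Your sketch is precisely the standard argument one finds in such references---Gordan's lemma for the forward direction, and the coset decomposition plus the ``multiply by $f^{N-1}$'' trick (using that $R$ is a domain and $R_{[\boldsymbol{d}]}$ is noetherian) for the converse---so there is no discrepancy to discuss.
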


We close this part with the following technical result.

\begin{lema}\label{lema3}
Let $\boldsymbol{e}_{1} , \,
\cdots , \boldsymbol{e}_{n} $ be the canonical basis of $\mathbb{Z}^{n}$. 
We set 
$\boldsymbol{e}=\sum_{i=1}^{m} 
\boldsymbol{e}_{j_{i}}$, where $j_{i} \neq j_{i'}$ for $i \neq i'$. 
Then the following conditions are equivalent:
\begin{itemize}
\item[(i)]
The ring 
$$\overline{R}=\underset{(a_{1}, \,
\cdots , \, a_{n}, \, b) \in (\mathbb{Z}_{\geq 0})^{n+1}}{\bigoplus} 
R_{(a_{1}, \, \cdots , \, a_{n})+b \boldsymbol{e}}$$ 
is a finitely generated $A$-algebra.
\item[(ii)]
For any $i$ such that $1 \leq i \leq m$, the ring 
$$\overline{R}^{i}=\underset{(a_{1}, \, \cdots , \, a_{j_{i}-1},\, a_{j_{i}+1},\, \cdots , \, a_{n},\, b) 
\in (\mathbb{Z}_{\geq 0})^{n}}{\bigoplus} 
R_{(a_{1}, \, \cdots , \, a_{j_{i}-1},\, 0,\, a_{j_{i}+1}, \, \cdots , \, a_{n}) +b\boldsymbol{e}}$$
 is a finitely generated $A$-algebra.
\end{itemize}
\end{lema}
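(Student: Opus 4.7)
The plan is to treat the two implications separately. The direction (i) $\Rightarrow$ (ii) is routine: since $\overline{R}$ is $(\mathbb{Z}_{\geq 0})^{n+1}$-graded and generated as an $A$-algebra by finitely many homogeneous elements $f_1,\ldots,f_N$, the sub-$A$-algebra $\overline{R}^i$ consists of those homogeneous pieces whose $j_i$-th coordinate in the grading vanishes. Because the grading has non-negative components, the $j_i$-coordinate of a monomial $\prod_k f_k^{e_k}$ vanishes if and only if $e_k=0$ whenever the $j_i$-coordinate of $\deg f_k$ is positive; hence $\overline{R}^i$ is generated over $A$ by the finite subset $\{f_k:(\deg f_k)_{j_i}=0\}$.

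For (ii) $\Rightarrow$ (i), fix finite homogeneous generating sets $G_i \subset \overline{R}^i$. For each $g \in G_i$, write $\boldsymbol{c}_g \in \mathbb{Z}_{\geq 0}^n$ for its underlying $R$-degree. The containment $g \in \overline{R}^i$ forces the identity $c_{g,j_i}=\min_\ell c_{g,j_\ell}$, and the same $R$-element $g$ may be regarded as lying in $\overline{R}$ at any grading of the form $(\boldsymbol{c}_g - b'\boldsymbol{e},\,b')$ for $b' \in \{0,1,\ldots,c_{g,j_i}\}$. I take $\tilde{G}$ to be the set of all such placements of all elements of $\bigcup_i G_i$; this is a finite subset of $\overline{R}$.

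The main step is to show that $\tilde{G}$ generates $\overline{R}$ as an $A$-algebra. Given a homogeneous $h \in \overline{R}_{(a_1,\ldots,a_n,b)}$ with underlying $R$-degree $\boldsymbol{c}=(a_1,\ldots,a_n)+b\boldsymbol{e}$, choose $i^* \in \{1,\ldots,m\}$ achieving $\min_\ell c_{j_\ell}$; then $h$ lies in $\overline{R}^{i^*}$, and one expands $h=\sum_\alpha c_\alpha\prod_{g \in G_{i^*}} g^{\alpha_g}$ as an $A$-polynomial in $G_{i^*}$ with each monomial of $R$-degree $\boldsymbol{c}$. For each such monomial, distribute the $b$-grading across its factors by assigning to each occurrence of each $g$ a placement parameter in $[0,c_{g,j_{i^*}}]$; the resulting $\overline{R}$-grading of the monomial is then $(\boldsymbol{c} - B\boldsymbol{e},\,B)$, where $B$ is the sum of these parameters over all occurrences. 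Because this sum ranges over $[0,\sum_g \alpha_g c_{g,j_{i^*}}]=[0,c_{j_{i^*}}]$ as we vary the parameters and $b \leq c_{j_{i^*}}$, a greedy integer assignment achieves $B=b$, placing the monomial at the desired grading $(a_1,\ldots,a_n,b)$.

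The main obstacle I expect is recognizing that one must commit to the specific index $i^*$ attaining $\min_\ell c_{j_\ell}$. A factorization of $h$ using generators drawn from an arbitrary $G_i$ may produce factors $g$ with $\min_\ell c_{g,j_\ell} < c_{g,j_i}$, shrinking the achievable range of $B$ to $[0,\sum_g \alpha_g \min_\ell c_{g,j_\ell}]$, which may fail to contain $b$. The rescuing observation is that every generator $g \in G_{i^*}$ satisfies $\min_\ell c_{g,j_\ell} = c_{g,j_{i^*}}$, so the aggregate range saturates at $c_{j_{i^*}}$, precisely the largest admissible value of $b$.
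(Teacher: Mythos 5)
Your argument is correct and is essentially the paper's proof of Lemma A \ref{lema3}: both directions use the same observations, and in (ii) $\Rightarrow$ (i) your ``placements'' of $g$ at degrees $(\boldsymbol{c}_g - b'\boldsymbol{e},\, b')$ are exactly the paper's elements $g_{ij}(s)$, with the same choice of the minimizing index and the same greedy distribution of $b$ across the factors. One small remark: in your ``main obstacle'' paragraph, the worry that a generator $g \in G_i$ could have $\min_\ell c_{g,j_\ell} < c_{g,j_i}$ never actually occurs (you proved equality yourself two paragraphs earlier); the genuine reason one must commit to an index $i^*$ attaining $\min_\ell c_{j_\ell}$ is simply that $h$ admits no placement at all in $\overline{R}^i$ when $c_{j_i}$ is not the minimum, so there is no factorization over $G_i$ to speak of.
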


\begin{proof}
For any homogeneous element 
$f \in R_{(a_{1}, \, \cdots , \, a_{j_{i}-1},\, 0,\, a_{j_{i}+1}, \, 
\cdots , \, a_{n}) +b \boldsymbol{e}}$, 
we put ${\rm deg}_{i} (f)= (a_{1}, \, \cdots ,\,a_{j_{i}-1},\, a_{j_{i}+1}, \, \cdots , \, a_{n}, \,b) 
\in (\mathbb{Z}_{\geq 0})^{n}$ and we will call it the degree of $f$ in $\overline{R}^{i}$.
Similarly, for any homogeneous element $f \in R_{(a_{1}, \, 
\cdots ,\,a_{n})+b\boldsymbol{e}}$ 
we put ${\rm Deg} (f)= (a_{1}, \, \cdots , \, a_{n}, \, b) \in 
(\mathbb{Z}_{\geq 0})^{n+1}$ 
and we will call it the degree of $f$ in $\overline{R}$. 
Note that $\overline{R}^{i}$ can be identified with the $A$-subalgebra of 
$\overline{R}$ generated by all 
homogeneous elements of $\overline{R}$ whose 
$j_{i}$-th component is zero.  
If $\overline{R}$ is generated as an $A$-algebra by finitely many 
elements of $\overline{R}$, 
which can be assumed to be homogeneous elements, 
then $\overline{R}^{i}$ is generated as an $A$-algebra by the 
generator of $\overline{R}$ whose $j_{i}$-th component  is zero. 
Thus $\overline{R}^{i}$ is also a finitely generated $A$-algebra.

Conversely, suppose that $\overline{R}^{i}$ is 
a finitely generated $A$-algebra for any $1 \leq i \leq m$. 
For simplicity, suppose that $j_{i}=i$ for any $1 \leq i \leq m$. 
Let $g_{i1}, \, \cdots , g_{ir(i)}$ be a 
generator of $\overline{R}^{i}$, where $g_{ij}$ is homogeneous for any 
$1 \leq j \leq r(i)$, and let 
$$\boldsymbol{a}^{ij}=(a_{1}^{ij} ,\, \cdots , \,
a_{(i-1)}^{ij},\, a_{(i+1)}^{ij} , \, \cdots , \, a_{n}^{ij} ,\, b^{ij})$$
be the degree of $g_{ij}$ in $\overline{R}^{i}$. 
Then 
$$g_{ij} \in R_{(a_{1}^{ij}+b^{ij} ,\, \cdots ,\, a_{(i-1)}^{ij}+b^{ij},\, b^{ij},\, 
a_{(i+1)}^{ij}+b^{ij} ,\, \cdots ,\, a_{m}^{ij}+b^{ij},\, a_{(m+1)}^{ij},\, \cdots ,\, a_{n}^{ij})} \:.$$
For any $0 \leq s \leq b^{ij}$, let $g_{ij}(s)$ be the element of $\overline{R}$ such that 
$$g_{ij}(s)=g_{ij} \Bigl(\in R_{(a_{1}^{ij}+b^{ij} ,\, \cdots ,\, 
a_{(i-1)}^{ij}+b^{ij},\, b^{ij}, \, a_{(i+1)}^{ij}+b^{ij} ,\,
\cdots ,\, a_{m}^{ij}+b^{ij},\, a_{(m+1)}^{ij},\, \cdots ,\, a_{n}^{ij})} \Bigr)\;,$$ 
and
\begin{equation*}
\begin{split}
{\rm Deg}(g_{ij}(s))&=(a_{1}^{ij}+b^{ij}-s , \, \cdots , \, 
a_{(i-1)}^{ij}+b^{ij}-s,\, b^{ij}-s,\\
& \qquad a_{(i+1)}^{ij}+b^{ij}-s ,\, \cdots ,\, a_{m}^{ij}+b^{ij}-s,\, 
a_{(m+1)}^{ij},\cdots ,\, a_{n}^{ij},\,s)\;.
\end{split}
\end{equation*}
Then we prove that $\overline{R}$ is generated as an 
$A$-algebra by all $g_{ij}(s)$, where $1\leq i \leq m$, 
$1 \leq j \leq r(i)$ and $0 \leq s \leq b^{ij}$. 

Pick any homogeneous element $f \in \overline{R}$ and let $(a_{1}, \, \cdots ,  a_{n},  b)$ 
be the degree of $f$ in $\overline{R}$. 
Pick an $l$ which satisfies $a_{l}={\rm min}\{ a_{i} \! \mid \! 1 \leq i \leq m\}$. 
Without loss of generality, we may assume that $l=1$. 
Let $f'$ be the element of $\overline{R}^{1}$ such that $f'=f$ as an element of 
$R_{(a_{1}+b, \, \cdots, \, a_{m}+b,\, a_{m+1}, \, \cdots ,\, a_{n})}$ and 
$${\rm deg}_{1}(f')=(a_{2}-a_{1}, \, \cdots ,\, a_{m}-a_{1},\, a_{m+1}, \,
\cdots, \,a_{n},\, a_{1}+b).$$
By the hypothesis there exists a polynomial $F \in A[X_{1}, \,
\cdots , X_{r(1)}]$ such that $f'=F(g_{11}, \, \cdots, g_{1r(1)})$. 
Taking the homogeneous decomposition, we may assume that for any monomial 
$\alpha X_{1}^{t_{1}} \cdots X_{r(1)}^{t_{r(1)}}$ of $F$, where $\alpha \in A\setminus \{0\}$, 
\begin{equation*}
\begin{split}
{\rm deg}_{1}(\alpha g_{11}^{t_{1}} \cdots g_{1r(1)}^{t_{r(1)}})&=
\Biggl(\sum_{j=1}^{r(1)}t_{j}a_{2}^{1j}, \,\cdots ,\, 
\sum_{j=1}^{r(1)}t_{j}a_{n}^{1j},\, \sum_{j=1}^{r(1)}t_{j}b^{1j} \Biggr)\\
&=(a_{2}-a_{1}, \, \cdots ,\, a_{m}-a_{1},\, a_{m+1},\, \cdots, \,a_{n},\, a_{1}+b)\\
& ={\rm deg}_{1}(f')\;. 
\end{split}
\end{equation*} 
Then $\sum_{j=1}^{r(1)}t_{j}b^{1j}=a_{1}+b \geq b$. 
Therefore, for each $1 \leq j \leq r(1)$ and $1 \leq \lambda \leq t_{j}$, 
we may find $0\leq s_{j \lambda} \leq b^{1j}$ 
such that $\sum _{j=1}^{r(1)}\sum_{\lambda=1}^{t_{j}}s_{j \lambda}=b$. 
Then
\begin{equation*}
\begin{split}
&{\rm Deg}(\alpha \Pi_{j=1}^{r(1)}\Pi_{\lambda =1}^{t_{j}} g_{1j}(s_{j \lambda}))\\
&=\Biggl(\sum _{j=1}^{r(1)}\sum_{\lambda=1}^{t_{j}}(b^{1j}-s_{j \lambda}), \,
\sum _{j=1}^{r(1)}\sum_{\lambda=1}^{t_{j}}(a_{2}^{1j}+b^{1j}-s_{j \lambda}), \,\cdots, \\
& \quad \quad \quad \sum _{j=1}^{r(1)}
\sum_{\lambda=1}^{t_{j}}(a_{m}^{1j}+b^{1j}-s_{j \lambda}),\,
\sum_{j=1}^{r(1)}t_{j}a_{m+1}^{1j}\,
,\cdots ,\, \sum_{j=1}^{r(1)}t_{j}a_{n}^{1j},\\
& \quad \quad \quad \quad \sum _{j=1}^{r(1)}
\sum_{\lambda=1}^{t_{j}}s_{j \lambda} \Biggr) \\
&=(a_{1}, \, \cdots, \, a_{n},\, b).
\end{split}
\end{equation*}
Considering all monomials of $F$, $f$ is expressed 
as a polynomial of $g_{ij}(s)$ with coefficients $A$. Thus, 
$\overline{R}$ is generated by all $g_{ij}(s)$ 
as an $A$-algebra. 
\end{proof}

\subsection*{Appendix B. Rational Polytope}
In this part, we collect the definition and some basic properties of rational polytopes. 

\begin{defnb}[Rational polytopes]\label{defb6.1}
Let $\mathcal{C}$ be a subset in a finite dimensional $\mathbb{R}$-vector space. 
Then $\mathcal{C}$ is a {\it polytope} if 
$\mathcal{C}$ is compact and the intersection of finitely many half-spaces, 
or equivalently, the convex hull of finitely many points. 
$\mathcal{C}$ is a {\it rational polytope} if $\mathcal{C}$ is a polytope defined by rational 
half-spaces, or the convex hull of finitely many rational points. 
$\mathcal{F} \subset \mathcal{C}$ is a {\it face} if whenever 
$\sum_{i=1}^{k} r_{i} \boldsymbol{v}_{i} \in \mathcal{F}$, 
where $r_{1}, \cdots , r_{k}$ are positive real numbers 
such that $\sum_{i=1}^{k} r_{i}=1$ 
and $\boldsymbol{v}_{1}, \cdots , \boldsymbol{v}_{k}$ belong to $\mathcal{C}$, then 
$\boldsymbol{v}_{1}, \, \cdots , \boldsymbol{v}_{k}$ belong to $\mathcal{F}$. 
We call $\mathcal{F}$ {\it proper face} of $\mathcal{C}$ if $\mathcal{F} \subsetneq \mathcal{C}$.
\end{defnb}

By the definition, a face of a 
polytope (resp.~rational polytope) is also a polytope (resp.~rational polytope).

\begin{lemb}\label{lemb1}
Let $\mathcal{C}$ be a rational 
polytope in $\mathbb{R}^{n}$ such that ${\rm dim}\,\mathcal{C}=n$ 
and let $p_{1} , \, \cdots , p_{m}$ be its vertices. 
For every $1 \leq i \leq m$, let $\mathcal{C}_{i}$ be the rational polytope spanned by 
$p_{1}, \, \cdots, p_{i-1},p_{i+1},\, \cdots, p_{m}$.
If $m>n+1$, then there exists a 
rational point $p$ in $\mathcal{C}$ such that $p \in \cap_{i=1}^{m} \mathcal{C}_{i}$. 
\end{lemb}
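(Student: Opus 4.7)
The plan is to invoke Radon's theorem, for which the hypothesis $m > n+1$ is precisely what is needed. Since $\mathcal{C}$ is a rational polytope the vertices $p_1, \ldots, p_m$ lie in $\mathbb{Q}^n$, and the strategy is to extract a rational affine dependence among them and then assemble the desired point via the two sides of the resulting Radon partition.

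First I would observe that the system $\sum_{i=1}^m \alpha_i = 0$ together with $\sum_{i=1}^m \alpha_i p_i = 0$ is a homogeneous linear system of $n+1$ equations in $m > n+1$ unknowns with rational coefficients, hence admits a non-trivial rational solution $(\alpha_1, \ldots, \alpha_m)$. Partition the indices into $I^+ = \{i : \alpha_i > 0\}$, $I^- = \{i : \alpha_i < 0\}$, and $I^0 = \{i : \alpha_i = 0\}$. Both $I^+$ and $I^-$ are nonempty since $\sum_i \alpha_i = 0$ but not all $\alpha_i$ vanish. Setting $A = \sum_{i \in I^+} \alpha_i = -\sum_{i \in I^-} \alpha_i > 0$, the point
$$p = \sum_{i \in I^+} \frac{\alpha_i}{A} p_i = \sum_{i \in I^-} \frac{-\alpha_i}{A} p_i$$
is a rational convex combination of both $\{p_i : i \in I^+\}$ and $\{p_i : i \in I^-\}$, so in particular it is a rational point of $\mathcal{C}$.

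It then remains to verify that $p \in \mathcal{C}_j$ for every $1 \le j \le m$. Fix such a $j$. Since $I^+ \cap I^- = \emptyset$, at least one of $j \notin I^+$ or $j \notin I^-$ holds. In the first case the representation of $p$ as a convex combination supported on $I^+$ exhibits it as a convex combination of vertices different from $p_j$, so $p \in \mathcal{C}_j$; in the second case the representation supported on $I^-$ does the same. This gives $p \in \bigcap_{i=1}^m \mathcal{C}_i$, as required.

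The only subtlety worth flagging is ensuring the affine dependence can be chosen with rational coefficients, but this is immediate because the linear system has rational coefficients and strictly more unknowns than equations. The rest is just the bookkeeping around the Radon partition, so I do not expect any real obstacle.
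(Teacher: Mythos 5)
Your proof is correct, and it takes a genuinely different and more streamlined route than the paper's. The paper first reduces by induction on $m$ to the critical case $m = n+2$, then normalizes coordinates by an affine transformation so that $p_1,\ldots,p_n$ are the standard basis and $p_{n+1}$ is the origin, and finally splits into cases according to the signs of the coordinates of $p_{n+2}$, exhibiting the point $p$ by explicit formulas in each case. You instead recognize the hypothesis $m > n+1$ as exactly the affine-dependence condition driving Radon's theorem, extract a rational affine dependence $\sum \alpha_i = 0$, $\sum \alpha_i p_i = 0$ from the rational linear system, and take the Radon point $p = \sum_{i\in I^+}(\alpha_i/A)p_i = \sum_{i\in I^-}(-\alpha_i/A)p_i$. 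The key observation that makes your argument end cleanly is that $I^+ \cap I^- = \emptyset$, so for every $j$ at least one of the two convex-combination representations of $p$ avoids the index $j$, giving $p \in \mathcal{C}_j$ directly. Your approach avoids both the induction and the coordinate case analysis, handles arbitrary $m > n+1$ in one stroke, and the rationality of $p$ comes for free from the rationality of the linear system. What the paper's argument buys is that it stays entirely within elementary coordinate manipulation and never appeals to (or re-proves) a named combinatorial result, but since you actually re-derive the Radon partition from the linear algebra rather than citing it as a black box, your version is equally self-contained and noticeably shorter.
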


\begin{proof}
We prove this lemma in several steps.

\setcounter{step}{0}
\begin{step}
In this step we reduce the lemma to the case where $m=n+2$ by the induction on $m$.

Suppose that the statement is true in the case of $m$ vertices. 
Then there 
exists a rational point $p$ in $\mathcal{C}$ such 
that $p \in \cap_{i=1}^{m} \mathcal{C}_{i}$. 
In the case of $m+1$ vertices, by changing indices if necessary, we may assume that 
${\rm dim}\,\mathcal{C}_{m+1}={\rm dim}\,\mathcal{C}=n$. 
Let $\mathcal{C}_{i}'$ be the rational polytope spanned by  
$p_{1}, \, \cdots, p_{i-1}, p_{i+1},\, \cdots, p_{m} $ for every $1 \leq i \leq m$. 
Then we have $\mathcal{C}'_{i} \subset \mathcal{C}_{i}$ and 
$\mathcal{C}_{i}' \subset \mathcal{C}_{m+1}$ for every $1 \leq i \leq m$. 
By the induction hypothesis, there is a rational point $p$ such that $p \in \cap_{i=1}^{m} \mathcal{C}'_{i}$.
Thus $p \in \cap_{i=1}^{m} \mathcal{C}'_{i} \subset 
\cap_{i=1}^{m+1} \mathcal{C}_{i}$ and so we are done. 
Therefore we may assume that $m=n+2$. 

By an appropriate affine transformation 
and changing the indices if necessary, we may assume that 
$p_{1}, \, \cdots , p_{n}$ are 
canonical basis of $\mathbb{R}^{n}$ and $p_{n+1}$ is the origin. 
Then we may write $p_{n+2}=(a_{1}, \, \cdots ,a_{n})$, where $a_{i} \in \mathbb{Q}$.
\end{step}
\begin{step}
We first prove the case where $a_{i} \geq 0$ for any $1 \leq i \leq n$. 
In this case, we have $\sum_{i=1}^{n}a_{i}>1$. 
Indeed, if 
$\sum_{i=1}^{n}a_{i}\leq1$, then 
$p_{n+2}=\sum_{i=1}^{n} a_{i}p_{i}+(1-\sum_{i=1}^{n}a_{i})p_{n+1}$. 
This contradicts to the hypothesis that 
$p_{1} ,\, \cdots, p_{n+2}$ are the vertices of $\mathcal{C}$. 
Set $a=\sum_{i=1}^{n}a_{i}$ and let $p$ be $(a_{1}/a, \, \cdots , a_{n}/a)$. 
Then $p$ is a rational point in $\mathcal{C}$ and 
\begin{equation*}
\begin{split}
p&=\sum_{i=1}^{n}\frac{a_{i}}{a} p_{i} \;(\in \mathcal{C}_{i}\; {\rm for} \;i=n+1,\;n+2)\\
&=\Bigl(\frac{1}{a} \Bigr)p_{n+2}+\Bigl(1-\frac{1}{a}\Bigr)p_{n+1} 
\;(\in \mathcal{C}_{i} \;{\rm for\; any}\; 1\leq i \leq n ).
\end{split}
\end{equation*}
Thus $p \in \cap_{i=1}^{n+2} \mathcal{C}_{i}$.
\end{step}

\begin{step}
We prove the case where $a_{i} < 0$ for some $i$. 
 By changing indices if necessary, we may assume that $a_{1}, \, \cdots , a_{l}<0$ 
and $a_{l+1}, \, \cdots , a_{n} \geq 0$ for some $l$. 
Set $a= - \sum_{i=1}^{l}a_{i}$ and $b=\sum_{i=l+1}^{n}a_{i}$. 

 If $1+a \leq b$, then let $p$ be $(0, \, \cdots, 0, a_{l+1}/b, \, \cdots, a_{n}/b)$. 
Then $p$ is a rational point 
in $\mathcal{C}$ and 
\begin{equation*}
\begin{split}
p&=\sum_{i=l+1}^{n}\Bigl( \frac{a_{i}}{b} \Bigr)p_{i} \;(\in 
\mathcal{C}_{i}\; {\rm for} \;i=1,\, \cdots , l,  n+1, n+2)\\
&=\frac{1}{b}  \cdot \Bigl( -\sum_{i=1}^{l} a_{i}p_{i} +p_{n+2}\Bigr)+
\Bigl( 1-\frac{1+a}{b}\Bigr)p_{n+1} \\
& \qquad \qquad (\in \mathcal{C}_{i} \;{\rm for\; any}\; l+1\leq i \leq n ).
\end{split}
\end{equation*}

If $1+a > b$, then set $p=(0, \, \cdots,0, a_{l+1}/(1+a),\, \cdots, a_{n}/(1+a)).$
Then $p$ is a rational point 
in $\mathcal{C}$ and 
\begin{equation*}
\begin{split}
p&=\sum_{i=l+1}^{n}\Bigl( \frac{a_{i}}{1+a} \Bigr)p_{i} +\Bigl( 1-\frac{b}{1+a}\Bigr)p_{n+1}\\
&\qquad \qquad\qquad(\in \mathcal{C}_{i}\; {\rm for} \;i=1,\, \cdots , l, n+2)\\
&=\frac{1}{1+a}\Bigl( -\sum_{i=1}^{l} a_{i}p_{i} +p_{n+2}\Bigr)\;
 (\in \mathcal{C}_{i} \;{\rm for\; any}\; l+1\leq i \leq n+1 ).
\end{split}
\end{equation*}
Thus, $p \in \cap_{i=1}^{n+2} \mathcal{C}_{i}$.
\end{step}
\end{proof}

\begin{lemb}\label{lemb2}
Let $\mathcal{C}$, $\mathcal{D}$, 
$\mathcal{D}_{1}, \, \cdots, \mathcal{D}_{r}$ be rational polytopes in 
$\mathbb{R}^{n}$ such that ${\rm dim}\, \mathcal{C}=n$ and 
$\mathcal{C} \supset \mathcal{D} = 
\cup_{i=1}^{r}\mathcal{D}_{i}$. 
Then there exists a finite $n$-dimensional 
rational simplex covering $\{ \Sigma_{\lambda} \}_{\lambda}$ 
of $\mathcal{C}$ such that $\mathcal{C}= 
\cup_{\lambda}\Sigma_{\lambda}$ and for any $\lambda$, 
$\mathcal{D} \cap \Sigma_{\lambda}$ is a face of $\Sigma_{\lambda}$ and contained in 
$\mathcal{D}_{i}$ for some $1\leq i \leq r$.
\end{lemb}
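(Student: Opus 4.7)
The plan is to construct the simplex covering in two stages: first a polyhedral refinement of $\mathcal{C}$ compatible with the decomposition $\mathcal{D} = \cup_{i=1}^{r} \mathcal{D}_i$, then a rational simplicial subdivision via a barycentric-type construction.

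For the polyhedral refinement, collect the finitely many rational hyperplanes supporting facets of $\mathcal{C}$ or of some $\mathcal{D}_i$. Their arrangement, intersected with $\mathcal{C}$, gives a rational polyhedral complex $\mathcal{P}$ subdividing $\mathcal{C}$ with the key property that for each cell $P \in \mathcal{P}$ and each $i$, either $P \subset \mathcal{D}_i$ or $\mathrm{relint}(P) \cap \mathcal{D}_i = \emptyset$. Consequently, for each cell $F$ of $\mathcal{P}$, either $F \subset \mathcal{D}$ (equivalently, $F \subset \mathcal{D}_i$ for some $i$) or $\mathrm{relint}(F) \cap \mathcal{D} = \emptyset$. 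For the simplicial refinement, pick a rational point $v_F \in \mathrm{relint}(F)$ for every face $F$ of $\mathcal{P}$, and for every chain $F_0 \subsetneq \cdots \subsetneq F_n$ of faces of $\mathcal{P}$ with $\dim F_n = n$, set $\Sigma_{F_\bullet} := \mathrm{Conv}(v_{F_0}, \ldots, v_{F_n})$. Since the chain is strictly increasing with $\dim F_n = n$, we must have $\dim F_i = i$; then $v_{F_i} \in \mathrm{relint}(F_i) \setminus \mathrm{aff}(F_{i-1})$ makes the $v_{F_i}$ affinely independent, so each $\Sigma_{F_\bullet}$ is a genuine rational $n$-simplex, and a routine induction on dimension shows these simplices cover $\mathcal{C}$.

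To verify the face property, fix $\Sigma = \Sigma_{F_\bullet}$ and let $k$ be the largest index with $F_k \subset \mathcal{D}$, setting $k = -1$ if no such index exists. Since $F_i \subset F_k$ for $i \leq k$, all those $F_i$ lie in $\mathcal{D}$, and I claim
\[
\mathcal{D} \cap \Sigma = \mathrm{Conv}(v_{F_0}, \ldots, v_{F_k}),
\]
which is a face of $\Sigma$ contained in $F_k$, hence in some $\mathcal{D}_i$ (and empty when $k = -1$). The inclusion $\supset$ is immediate. For $\subset$, write $x = \sum_i \lambda_i v_{F_i} \in \Sigma$ and let $j$ be the largest index with $\lambda_j > 0$; the standard convex-geometry fact that $\lambda_j v_{F_j} + (1 - \lambda_j) u \in \mathrm{relint}(F_j)$ whenever $u \in F_j$, $v_{F_j} \in \mathrm{relint}(F_j)$, and $\lambda_j > 0$ yields $x \in \mathrm{relint}(F_j)$; if $j > k$ then $F_j \not\subset \mathcal{D}$, so $\mathrm{relint}(F_j) \cap \mathcal{D} = \emptyset$ by the refinement property, forcing $x \notin \mathcal{D}$.

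The main obstacle is not the construction itself but the verification that $\mathcal{D} \cap \Sigma$ is a \emph{single} face of $\Sigma$ rather than a union of faces. This rests on the combinatorial fact that $\{i : F_i \subset \mathcal{D}\}$ is an initial segment of the chain, immediate from $F_i \subset F_k$ when $F_k \subset \mathcal{D}$ and $i \leq k$; this forces the vertices of $\Sigma$ lying in $\mathcal{D}$ to be contiguous, so that their convex hull is genuinely a face of $\Sigma$. Such contiguity would fail for a generic simplicial subdivision but is guaranteed by the nested-flag structure of the barycentric-type refinement.
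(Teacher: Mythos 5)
Your proof is correct but takes a genuinely different route from the paper's. The paper argues by induction on $\dim\mathcal{C}$: it writes $\mathcal{D}=\bigcap_j(H_j)_{\geq 0}$ as an intersection of rational half-spaces, covers $\mathcal{C}$ by $\mathcal{D}$ together with the complementary pieces $\mathcal{C}'_j=(H_j)_{\leq 0}\cap\mathcal{C}$, and, for each full-dimensional $\mathcal{C}'_j$, cones a simplex covering (obtained by the induction hypothesis applied to the $(n-1)$-dimensional faces of $\mathcal{C}'_j$) over an interior rational point; the crucial feature is that $\mathcal{C}'_j\cap\mathcal{D}$ already lies in a proper face of $\mathcal{C}'_j$, so the face property survives the coning, and one appends a triangulation of each $\mathcal{D}_i$ at the end to cover $\mathcal{D}$ itself. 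You instead build a single hyperplane arrangement refining $\mathcal{C}$ and all the $\mathcal{D}_i$ simultaneously, take a rational barycentric-type subdivision of the resulting polyhedral complex, and check the face property directly from the observation that, along any flag $F_0\subsetneq\cdots\subsetneq F_n$, the set of indices $i$ with $F_i\subset\mathcal{D}$ is an initial segment. Your version avoids the dimension induction and the separate triangulation of the $\mathcal{D}_i$, produces an honest triangulation rather than a mere covering, and makes the face property transparent; the paper's version avoids invoking arrangement and barycentric-subdivision machinery. One point you should fix: when $\dim\mathcal{D}_i<n$, the phrase ``hyperplanes supporting facets of $\mathcal{D}_i$'' does not supply enough hyperplanes. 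You must take all hyperplanes occurring in a rational half-space representation of $\mathcal{D}_i$, in particular those cutting out $\mathrm{aff}(\mathcal{D}_i)$; otherwise a full-dimensional cell $P$ can meet $\mathrm{relint}(\mathcal{D}_i)$ without being contained in $\mathcal{D}_i$, and the dichotomy ``$P\subset\mathcal{D}_i$ or $\mathrm{relint}(P)\cap\mathcal{D}_i=\emptyset$'' fails. With that adjustment, your argument is sound.
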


\begin{remb}\label{remb4}
In Lemma B \ref{lemb2},  the word \lq\lq a simplex covering\rq\rq 
\ 
means a covering by simplices. In particular it does not mean a triangulation. 
Similarly, the covering $\mathcal D=\cup _{i=1}^r \mathcal D_i$ of $\mathcal D$ 
by $\{\mathcal D_i\}_{i=1}^r$  
need not be a subdivision of $\mathcal D$ by $\{\mathcal D_i\}_{i=1}^r$. 
\end{remb}

\begin{proof}[Proof of Lemma B \ref{lemb2}]
We prove it by the induction on the dimension of $\mathcal{C}$. 
In the case  of ${\rm dim}\,\mathcal{C} =0$, the statement is trivial. 
So we may assume that ${\rm dim}\,\mathcal{C}>0$. 

Since $\mathcal{D}$ is 
a rational polytope, there are finitely many affine functions $H_{1}, \,\cdots , H_{k}$ 
such that $\mathcal{D}$ is the intersection of these half spaces 
$(H_{j})_{\geq0}=\{ \boldsymbol{x} \in \mathbb{R}^{n}  \mid H_{j}(\boldsymbol{x}) \geq 0\}$.
Set $(H_{j})_{\leq 0}=\{ \boldsymbol{x} \in \mathbb{R}^{n}  \mid H_{j}(\boldsymbol{x}) \leq 0\}$ 
and consider $\mathcal{C}'_{j}=(H_{j})_{\leq 0} \cap \mathcal{C}$. 
Note that if ${\rm dim}\,\mathcal{C}'_{j}<n$ for some $j$, then 
$\mathcal{C}'_{j}$ is a proper face of $\mathcal{C}$.
Therefore, if we pick all indices $j$ satisfying the condition that ${\rm dim}\,\mathcal{C}'_{j}=n$, 
then we have $\mathcal{C}=(\cup_{j}\mathcal{C}'_{j})\cup \mathcal{D}$. 

Pick an index $j$ such that ${\rm dim}\,\mathcal{C}'_{j}=n$. 
We note that $\mathcal{C}'_{j} \cap \mathcal{D}$ is 
a rational polytope contained in a proper  face of $\mathcal{C}'_{j}$. 
Fix an interior rational point $p$ of $\mathcal{C}'_{j}$ and let $\mathcal{F}$ be an 
$(n-1)$-dimensional face of $\mathcal{C}'_{j}$.
Then $\mathcal{F} \cap \mathcal{D}$ 
and each $\mathcal{F} \cap \mathcal{D}_{i}$ is empty or 
a rational polytope in $\mathbb{R}^{n-1}$ and 
$\mathcal{F} \supset \mathcal{F} \cap 
\mathcal{D}= \cup_{i=1}^{r}\mathcal{F} \cap \mathcal{D}_{i}$.
Therefore $\mathcal{F}$, $\mathcal{F} \cap \mathcal{D}$ and 
$\mathcal{F} \cap \mathcal{D}_{i}$, where $1\leq i \leq r$, satisfy the 
induction hypothesis. 
So there is a finite $(n-1)$-dimensional rational 
simplex covering $\{ \Sigma'_{\lambda'} \}_{\lambda'}$ 
of $\mathcal{F}$ which satisfies the conditions of the lemma. 
Let $\Sigma''_{\lambda'}$ be the convex hull spanned by $p$ and $\Sigma'_{\lambda'}$. 
Then $\{ \Sigma''_{\lambda'} \}_{\lambda'}$ is 
a finite $n$-dimensional rational simplex covering 
of the convex hull spanned by $p$ and $\mathcal{F}$ satisfying 
the conditions of the lemma. 
Considering all $(n-1)$-dimensional 
faces of $\mathcal{C}'_{j}$, we may find a finite $n$-dimensional 
rational simplex covering of $\mathcal{C}'_{j}$ satisfying 
the conditions of the lemma. 

Considering all $\mathcal{C}'_{j}$ 
and a triangulation of each $\mathcal{D}_{i}$, where 
$1 \leq i \leq r$, we get a desired covering.
\end{proof}


\end{document}